\newtheorem{theorem}{Theorem}[section]
\newtheorem{lemma}{Lemma}[section]
\newtheorem{assumption}{Assumption}[section]
\newcommand{\N}{\mathbb{N}}
\newcommand{\R}{\mathbb{R}}
\newcommand{\C}{\mathbb{C}}
\newcommand{\grad}{\nabla}
\begin{document}
\setlength{\parskip}{1mm}
\setlength{\oddsidemargin}{0.1in}
\setlength{\evensidemargin}{0.1in}
\lhead{}
\rhead{}

\begin{center}
{\bf \Large \noindent The inverse scattering problem for a conductive boundary condition and transmission eigenvalues }\\
\vspace{0.2in}
Isaac Harris\\
Department of Mathematics\\
Texas A $\&$ M University \\
College Station, Texas USA 77843-3368\\
E-mail: iharris@math.tamu.edu\\

\vspace{0.2in}

Andreas Kleefeld \\
Forschungszentrum J\"{u}lich GmbH \\
Institute for Advanced Simulation\\
J\"{u}lich Supercomputing Centre \\
Wilhelm-Johnen-Stra{\ss}e, 52425 J\"{u}lich, Germany\\
Email: a.kleefeld@fz-juelich.de\\
\end{center}

\begin{abstract}
\noindent In this paper, we consider the inverse scattering problem associated with an inhomogeneous media with a conductive boundary. First, we discuss the inverse conductivity problem of reconstructing the conductivity parameter from scattering data. 
Next, we consider the corresponding interior transmission eigenvalue problem. This is a new class of eigenvalue problem that is not elliptic, not self-adjoint, and non-linear, which gives the possibility of complex eigenvalues. We investigate the convergence of the eigenvalues as the conductivity parameter tends to zero as well as prove existence and discreteness for the case of an absorbing media. Lastly, several numerical and analytical results support the theory and we show that the inside-outside duality method can be used to reconstruct the interior conductive eigenvalues.
\end{abstract}

\noindent {\bf Keywords}: inverse scattering, conductive boundary condition, transmission eigenvalues.

\section{Introduction and problem statement}

In this section, we formulate the problems under consideration. We are interested in recovering a conductive boundary coefficient from measured scattering data and/or the so-called interior transmission eigenvalues. These eigenvalue problems can be derived by studying the injectivity of the operators used in qualitative inversion methods. The interior transmission eigenvalue problem is non-linear and non self-adjoint which makes their investigation interesting from the mathematical point of view, but also a challenging task from the computational point of view.
The inverse problem we consider here corresponds to the following direct scattering problem: find the total field $u \in H^1(D)$ and scattered field $u^s \in H^1_{loc}(\R^d \setminus \overline{D})$ (where $d=2,3$) such that  
\begin{align}
\Delta u^s +k^2 u^s=0  \quad \textrm{ in }  \R^d \setminus \overline{D}  \quad  \text{and} \quad  \Delta u +k^2 nu=0  \quad &\textrm{ in } \,  {D}  \label{direct1}\\
 (u^s+u^i )^+ - u^-=0  \quad  \text{and} \quad \partial_\nu (u^s+u^i )^+ + \eta (u^s+u^i )^+= {\partial_{\nu} u^-} \quad &\textrm{ on } \,  \partial D \label{direct3}\\
 \lim\limits_{r \rightarrow \infty} r^{{(d-1)}/{2}} \left( {\partial_r u^s} -\text{i} k u^s \right)=0 \label{SRC}&.
\end{align}
The superscripts $+$ and $-$ indicate the trace on the boundary taken from the exterior or interior of the domain $D$, respectively. The total field is given by $u=u^s+u^i$ where the incident field is either given by a plane wave $u^i=\text{e}^{\text{i} k x \cdot \hat{y} }$ or a point source $u^i = \Phi_k (x,y)$ where
$$\Phi_k (x,y)= \left\{\begin{array}{lr} \frac{ \text{i} }{4} H^{(1)}_0 (k | x-y |) \, \, & \, \text{in} \, \, \R^2 \\
 				&  \\
 \frac{1}{4 \pi} \frac{\text{e}^{ \text{i} k | x-y |} }{| x-y |}  & \,  \text{in} \,\,   \R^3.
 \end{array} \right. $$  
 Here the incident direction $\hat{y}$ given by a point on the unit sphere/circle denoted $\mathbb{S}$ for a plane wave where as the source point $y$ is some point on a $\mathcal{C}^2$ curve/surface denoted $C$. We let $D \subset \R^d$ be a bounded simply connected open set with $\nu$ being the unit outward normal to the boundary where the boundary $\partial D$ is of class $\mathcal{C}^2$. Here we assume that $\overline{D} \subset \text{int}(C)$. Furthermore, we assume that the refractive index $n(x)\in L^{\infty}(D)$ and the conductivity parameter $\eta (x) \in L^{\infty}(\partial D)$. Note that the refractive index and conductivity parameters are allowed to take complex values which physically represents an absorbing media.

The interior transmission eigenvalue problem with conductive boundary conditions associated with \eqref{direct1}--\eqref{SRC} reads: find $k \in \C$ and nontrivial $(w,v) \in L^2(D) \times L^2(D)$ where $w-v \in  H^2(D) \cap H^1_0(D)$ such that  
\begin{align}
\Delta w +k^2 n w=0 \quad \text{and} \quad \Delta v + k^2 v=0  \quad &\textrm{ in } \,  D \label{teprob1} \\
 w-v=0  \quad  \text{and} \quad  {\partial_{\nu} w}-{\partial_\nu v}= \eta v \quad &\textrm{ on } \partial D \label{teprob2} 
\end{align} 
where we define the Sobolev space 
$$H^2(D) \cap H^1_0(D) = \big\{u\in H^2(D) \, : \, u=0 \, \text{ on } \partial D \big\}$$
equipped with the $H^2(D)$ norm. This eigenvalue problem can be derived from considering an obstacle that `does not' scatter for a specific wave number $k$ and incident field. The interior eigenvalue problem for an inhomogeneous media with a conductive boundary was studied in \cite{te-cbc}, where they used the theoretical results in \cite{chtevexist} to prove existence of real eigenvalues. 
Their numerical experiments seem to indicate that the interior conductive eigenvalues can be reconstructed using the 
linear sampling method and that they converge to the interior transmission eigenvalues as the conductivity parameter goes to zero. Here we  prove the convergence of the  eigenvalues as the conductivity parameter goes to zero as well as prove the existence and discreteness of the transmission eigenvalues for an absorbing media.

Notice that from the conductive eigenvalue problem \eqref{teprob1}--\eqref{teprob2} we have that the difference $u=w-v \in H^2(D) \cap H^1_0(D)$ satisfies 
$$ \Delta u +k^2 n u=-k^2(n-1)v  \quad \textrm{ in } \, D, \quad u=0 \, \, \textrm{ and } \, \, {\partial_\nu}u =  \eta v \, \textrm{ on } \, \partial  D.$$
This implies that the conductive eigenvalue problem can be written as: 
find the values $k \in \C$ such that there is a nontrivial solution $u \in H^2(D) \cap H^1_0(D)$ satisfying
\begin{align}
(\Delta+k^2)\frac{1}{n-1}(\Delta u +k^2 n u)=0  \quad &\textrm{ in } \,  D, \label{teprobu3} \\
-\frac{k^2}{\eta} \frac{\partial u}{\partial \nu}=\frac{1}{n-1}(\Delta u +k^2 n u) \quad &\textrm{ on } \partial D.  \label{teprobu4} 
\end{align}
Notice that $v$ and $w$ are related to the eigenfunction $u$ by
$$v = -\frac{1}{k^2(n-1)}(\Delta u + k^2 n u) \quad \text{ and } \quad w = -\frac{1}{k^2(n-1)}(\Delta u + k^2 u). $$
It is clear that the conductive eigenvalue problem \eqref{teprob1}--\eqref{teprob2} is equivalent to the eigenvalue problem \eqref{teprobu3}--\eqref{teprobu4}.

The structure of the paper is the following: in Section 2 it is explained how to recover the material parameter $\eta$ from either near field or far field data emphasizing the first case. Additionally, uniqueness is shown and an algorithm is presented to obtain this conductivity parameter $\eta$ assuming a constant index of refraction. In Section 3, the conductive eigenvalue problem is studied in detail. First, it is shown how to obtain conductive interior transmission eigenvalues from either near field or far field data for non-absorbing media. Next, we show the convergence of the eigenvalues $\eta$ tending to zero as well as the convergence of the corresponding eigenfunctions. Then, existence and discreteness of conductive interior transmission eigenvalues is proven for an absorbing media. 
Numerical experiments complement the theory and are given in Section 4. It is shown that the conductive interior transmission eigenvalues and eigenfunctions converge linearly to the classic interior transmission eigenvalues and eigenfunctions both for the real-valued and complex-valued case. It is also demonstrated that one is able to calculate conductive interior transmission eigenvalues with the inside-outside duality method although the theory still has to be carried out. Finally, a short summary and a conclusion is given in Section 5.

\section{Recovery  of the conductivity from scattering data}
Under physical sign assumptions for the coefficients we have that the forward problem \eqref{direct1}--\eqref{SRC} is well-posed (see \cite{fmconductbc}) and we consider the problem of determining the boundary conductivity from either near field (NF) or far field (FF) data sets given by 
$$ \text{NF-data}=\big\{ u^s(x,y) \,  :  \, \forall \, x,y \in C \big\}  \quad \text{ and } \quad  \text{FF-data}=  \big\{ u^\infty (\hat{x}, \hat{y}) \,  :  \, \forall \,  \hat{x},\hat{y} \in \mathbb{S} \big\} . $$
For the NF-data set we have that the scattered field in the exterior of $D$ is given by Green's representation formula (see \cite{p1})
\begin{eqnarray}
u^s({x}, {y} )= \int\limits_{\partial D} u^s(z,{y}) {\partial_{\nu(z)} } \Phi_k (z,x) -\Phi_k (z,x) {\partial_{\nu(z)} }  u^s(z, {y} ) \, \text{d}s(z) \label{nfpdef}
\end{eqnarray}
for any $x \in \R^d \setminus \overline{D}$ and $y \in C$.
In the case of FF-data the scattered field has the expansion (see \cite{p1})
$$ u^s(x,\hat{y})=\gamma \frac{\text{e}^{\text{i}k|x|}}{|x|^{{(d-1)}/{2}}} \left\{u^{\infty}(\hat{x}, \hat{y} ) + \mathcal{O} \left( \frac{1}{|x|}\right) \right\}\; \textrm{  as  } \;  |x| \to \infty$$
where 
$$\gamma = \frac{ \text{e}^{\text{i}\pi/4} }{ \sqrt{8 \pi k} } \,\,\, \text{in} \,\,\, \R^2 \quad \text{and} \quad  \gamma = \frac{1}{ 4\pi } \,\,\, \text{in} \,\,\, \R^3. $$
The corresponding far field pattern $u^\infty$ can be derived from Green's representation and is given by 
\begin{eqnarray}
u^\infty(\hat{x}, \hat{y} )= \int\limits_{\partial \Omega} u^s(z,\hat{y}) {\partial_{\nu(z)} } \text{e}^{-\text{i}k z \cdot \hat{x} } -\text{e}^{-\text{i}k z \cdot \hat{x}} {\partial_{\nu(z)} }  u^s(z, \hat{y} ) \, \text{d}s(z) \label{ffpdef}
\end{eqnarray}
where we have used the fact that 
$$ \phi_z (\hat{x} ) = \text{e}^{-\text{i}k z \cdot \hat{x} }$$
is the far field pattern for the fundamental solution $\Phi_k (z,x)$. The region $\Omega$ is any subset of $\R^d$ such that $D \subseteq \Omega$. For the case of the NF-data set, we define the well-known near field operator as
\begin{align}
N: L^2(C) \longmapsto L^2(C) \quad \text{ such that } \quad (Ng)(x)=\int_{C} u^s( x, y ) g(y) \text{d}s(y). \label{NF-operator}  
\end{align}
 
Where as in the case of the FF-data set we can define the far field operator by 
\begin{align}
F: L^2(\mathbb{S}) \longmapsto L^2(\mathbb{S}) \quad \text{ such that } \quad (Fg)({\hat x})=\int_{{\mathbb S}} u^\infty(\hat{x}, \hat{y} )  g(\hat{y})\text{d}s({{\hat y}}). \label{FF-operator}  
\end{align}

In this section, we consider the inverse problem of determining the conductivity parameter $\eta$ from the scattering data. The problem of reconstruction $\eta$ from FF-data has been studied in \cite{ccm-findbc} for a purely imaginary conductivity. Therefore, we consider that case of the NF-data set with a general complex valued conductivity. Here it is assumed that the refractive index $n$ is given such that 
$$\text{Re} \big(  n(x) \big) \geq n_{min}>0  \quad \text{and}  \quad \text{Im} \big( n(x) \big) \geq 0$$
holds for almost every $x\in D$ and the conductivity $\eta$ satisfies    
$$\text{Re} \big(  \eta (x) \big) \geq 0  \quad \text{and}  \quad \text{Im} \big( \eta (x) \big) \geq 0$$
for almost every $x\in \partial D$. The analysis presented for the NF-data set can easily be modified for the FF-data set. We then derive a reconstruction algorithm for recovering the conductivity $\eta$ from the measured NF data.

\subsection{Uniqueness for reconstructing the conductivity}
Here we prove uniqueness and stability of our inverse problem. We will assume that the obstacle $D$ is known and we will show that knowledge of the measured NF-data gives uniqueness and stability. In  \cite{ccm-findbc} it is shown that the FF-data set can uniquely determine a purely imaginary conductivity and in \cite{ccm-findbc} a numerical algorithm is derived to determine the conductivity from the FF-data. To avoid repetition we will focus on the case of the NF-data set. We turn our attention to proving uniqueness and stability. To do so, we need the following assumption.
\begin{assumption}
The wave number $k \in \R$ is not a Dirichlet eigenvalue for the operator $\Delta +k^2 n$ in $D$. 
\end{assumption} 
Notice that if $\text{Im}(n) \neq 0$ in $D$ then we have that there are no real Dirichlet eigenvalues for $\Delta +k^2 n$ in $D$. With this assumption we now prove the following lemma.

\begin{lemma}\label{perp-of-total-field}
The set 
$$\mathcal{N}= \big\{ u( \cdot \, ,\, y) \,\,   : \, \,  u \, \text{ is the total field for } \eqref{direct1}-\eqref{SRC} \, \,   \text{for a.e. } \, \, \, y \in C \big\} \subset L^2(\partial D)$$
satisfies that $\mathcal{N}^{\perp}=\{0\}$. 
\end{lemma}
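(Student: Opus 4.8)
The plan is to prove completeness by a duality argument built on the transpose of the direct problem, combined with Rellich's lemma and the interior Dirichlet uniqueness furnished by the Assumption. Suppose $\phi\in L^2(\partial D)$ satisfies $\langle u(\cdot,y),\phi\rangle_{L^2(\partial D)}=0$ for a.e. $y\in C$; writing $\tilde\phi=\overline{\phi}$ this becomes $\int_{\partial D}u(x,y)\tilde\phi(x)\,\text{d}s(x)=0$ for a.e. $y\in C$, and the goal is to deduce $\tilde\phi=0$.

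First I would introduce the auxiliary transmission problem (the transpose of \eqref{direct1}--\eqref{SRC}) associated with $\tilde\phi$: find a radiating $p\in H^1_{loc}(\R^d\setminus\overline{D})$ with $\Delta p+k^2 p=0$ in $\R^d\setminus\overline{D}$ and $q\in H^1(D)$ with $\Delta q+k^2 n q=0$ in $D$ such that, on $\partial D$, one has the continuity $q^- = p^+$ together with the Neumann jump $\partial_\nu q^- - \partial_\nu p^+ - \eta\,q^- = \tilde\phi$. This problem has the same structure as the direct problem and is well-posed under the standing sign assumptions (compare \cite{fmconductbc}); the point is that the density $\tilde\phi$ is placed as a \emph{source} in the jump condition, so that it survives to the end of the argument. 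In particular $p$ is a radiating solution of the Helmholtz equation in the exterior of $D$.

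The crux is a Green's-identity computation relating this auxiliary field to the data. Applying Green's second identity to $u(\cdot,y)$ and $q$ in $D$, to the scattered field $u^s(\cdot,y)$ and $p$ in $\R^d\setminus\overline{D}$, and to the incident field $\Phi_k(\cdot,y)$ and $p$ in $\R^d\setminus\overline{D}$ (the last producing the point value $p(y)$ from the source at $y$), then summing the three identities and eliminating the normal derivatives by means of the conductive condition in \eqref{direct3} and the transmission conditions of the auxiliary problem, all boundary terms cancel except the pairing against $\tilde\phi$. This yields the representation $p(y)=\int_{\partial D}u(x,y)\tilde\phi(x)\,\text{d}s(x)$ for every exterior $y$. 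Keeping track of the interior versus exterior normals and isolating the source term is the main obstacle, but it is a routine (if delicate) manipulation once the auxiliary problem is chosen as above.

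Finally I would close the argument using unique continuation and the Assumption. The orthogonality hypothesis forces $p(y)=0$ for a.e. $y\in C$; since $p$ is a radiating solution of $\Delta p+k^2 p=0$ in the connected exterior of $D$ and $\overline{D}\subset\text{int}(C)$, Rellich's lemma and the unique continuation principle give $p\equiv 0$ in $\R^d\setminus\overline{D}$, so that $p^+=0$ and $\partial_\nu p^+=0$ on $\partial D$. The auxiliary transmission conditions then reduce to $q^-=0$ and $\partial_\nu q^-=\tilde\phi$ on $\partial D$. But $q$ solves $\Delta q+k^2 n q=0$ in $D$ with vanishing Dirichlet trace, so the Assumption that $k$ is not a Dirichlet eigenvalue of $\Delta+k^2 n$ in $D$ yields $q\equiv 0$, whence $\partial_\nu q^-=0$ and therefore $\tilde\phi=0$, i.e. $\phi=0$. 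This gives $\mathcal N^{\perp}=\{0\}$ as claimed.
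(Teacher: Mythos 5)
Your proposal is correct and follows essentially the same route as the paper's proof: introduce the adjoint/transpose transmission problem with the density $\tilde\phi$ placed as a source in the conductive jump condition, combine Green's identities in $D$ and in $B_R\setminus\overline{D}$ with the radiation condition to show the auxiliary exterior field equals the orthogonality pairing (hence vanishes on $C$), and then use Rellich/unique continuation together with the non-eigenvalue Assumption to conclude $\tilde\phi=0$. The only differences are cosmetic (the paper uses a single field $v$ in place of your pair $(p,q)$, a bilinear rather than sesquilinear pairing, and an opposite sign convention in the jump condition).
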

\begin{proof}
To prove the claim assume that $\varphi \in L^2(\partial D)$ is such that 
$$ \int\limits_{\partial D} \varphi  u( \cdot \, ,\, y) \, \text{d}s=0 \quad \text{ for a.e. }\, \, y \in C.$$
We now let $v \in H^1_{loc} (\R^d)$ be the unique radiating solution to  
$$\Delta v +k^2 v=0  \quad \textrm{ in } \,  \, \R^d \setminus \overline{D}  \quad  \text{and} \quad  \Delta v +k^2 nv=0  \quad \textrm{ in } \, \, {D} $$ 
with boundary conditions 
$$ v^+ = v^-   \quad  \text{ and } \quad \partial_\nu v^+ -{\partial_{\nu} v^-}  + \eta v^+= \varphi \quad \textrm{ on } \,\,  \partial D.$$
Since both $u$ and $v$ satisfy the same Helmholtz equation inside $D$ Green's second identity gives that
\begin{align*}
0&=  \int\limits_{\partial D} u \partial_\nu v^- - v \partial_\nu u^- \, \text{d}s \\
&= \int\limits_{\partial D} u [ \partial_\nu v^+ +\eta v -\varphi] - v[ \partial_\nu u^+ +\eta u] \, \text{d}s\\
&= \int\limits_{\partial D} u \partial_\nu v^+ - v \partial_\nu u^+ \, ds - \int\limits_{\partial D} \varphi  u( \cdot \, ,\, y) \, \text{d}s.
\end{align*}
Now, since $\varphi \in \mathcal{N}^{\perp}$ and using the identity $u=u^s+u^i$ we have that 
$$\int\limits_{\partial D} v \partial_\nu u^i - u^i \partial_\nu v \, \text{d}s= \int\limits_{\partial D} u^s \partial_\nu v - v \partial_\nu u^s \, \text{d}s$$
where the traces in these boundary integrals are all exterior traces. Let $B_R$ be the ball centered at the origin with radius $R$ large enough so $\overline{D} \subset B_R$. Similarly by appealing to Green's second identity for $u^s$ and $v$ in $B_R \setminus \overline{D}$ we have that 
$$\int\limits_{\partial D} v \partial_\nu u^i - u^i \partial_\nu v \, \text{d}s= - \int\limits_{\partial B_R} u^s \partial_\nu v - v \partial_\nu u^s \, \text{d}s.$$
By the radiation condition we have that the boundary integral over $\partial B_R$  tends to zero as $R$ tends to infinity. Therefore, since $u^i= \Phi(\cdot\, , \,  y)$ we can conclude that 
$$ 0= \int\limits_{\partial D} v \partial_\nu \Phi_k (\cdot\, , \,  y) - \Phi_k (\cdot\, , \,  y) \partial_\nu v \, \text{d}s = v(y)  \quad \text{ for a.e. }\, \, y \in C.$$
This implies that $v=0$ in the exterior of $D$ and the continuity across the boundary $\partial D$ of the Dirichlet data implies that 
$$ \Delta v +k^2 n v= 0 \quad \text{ in } \, \, D \quad \text{and } \quad v=0 \quad \text{on } \, \, \partial D.$$ 
This gives that $v=0$ in the interior of $D$ since $k^2$ is not an eigenvalue which implies that $\varphi=0$ on $\partial D$. 
\end{proof}

With the previous result we are ready to prove the uniqueness for determining $\eta$ from the NF-data set. 
\begin{theorem}\label{uniqueness}
The set of NF measurements 
$$\big\{ u^s(x,y) \,  \, : u^s  \, \text{ is the scattered field for } \eqref{direct1}-\eqref{SRC} \, \, \, \text{ for a.e. } \,\, \,  x,y \in C \big\} $$
uniquely determines the conductivity $\eta \in L^\infty (\partial D)$ 
\end{theorem}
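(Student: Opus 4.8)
The plan is to prove uniqueness directly: suppose two conductivities $\eta_1,\eta_2 \in L^\infty(\partial D)$ produce the same NF-data, and show $\eta_1=\eta_2$. For each source point $y \in C$ let $u_j(\cdot,y)=u^s_j(\cdot,y)+u^i$ and $u^s_j(\cdot,y)$ denote the total and scattered fields for problem \eqref{direct1}--\eqref{SRC} with conductivity $\eta_j$, recalling that the incident field $u^i=\Phi_k(\cdot,y)$ and the refractive index $n$ are common to both.

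First I would upgrade the equality of data on $C$ to equality of the scattered fields throughout the exterior of $D$. Fix $y\in C$ and set $w=u^s_1(\cdot,y)-u^s_2(\cdot,y)$. Then $w$ is a radiating solution of the Helmholtz equation in $\R^d\setminus\overline{D}$ vanishing on $C$. Since $C$ is a closed $\mathcal{C}^2$ surface enclosing $D$, we may view $w$ as a radiating solution in the unbounded region exterior to $C$ carrying zero Dirichlet data on $C$; the classical uniqueness theorem for the exterior Dirichlet problem (Rellich's lemma together with unique continuation) forces $w\equiv 0$ there, and unique continuation across the connected set $\R^d\setminus\overline{D}$ then gives $u^s_1(\cdot,y)=u^s_2(\cdot,y)$ in all of $\R^d\setminus\overline{D}$. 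In particular the exterior Cauchy data agree: $u_1^+=u_2^+$ and $\partial_\nu u_1^+=\partial_\nu u_2^+$ on $\partial D$.

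Next I would transfer this to the interior. Both $u_1(\cdot,y)$ and $u_2(\cdot,y)$ solve $\Delta u+k^2 n u=0$ in $D$ with the same Dirichlet trace $u_1^-=u_1^+=u_2^+=u_2^-$ on $\partial D$; because $k$ is not a Dirichlet eigenvalue of $\Delta+k^2 n$ in $D$ (our standing assumption), the interior Dirichlet problem is uniquely solvable, hence $u_1(\cdot,y)=u_2(\cdot,y)$ in $D$ and therefore $\partial_\nu u_1^-=\partial_\nu u_2^-$ as well. Writing the conductive transmission condition $\partial_\nu u_j^+ + \eta_j u_j^+=\partial_\nu u_j^-$ for $j=1,2$ and subtracting, every term cancels except the conductive one, leaving $(\eta_1-\eta_2)\,u(\cdot,y)=0$ on $\partial D$, where $u:=u_1(\cdot,y)=u_2(\cdot,y)$.

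Finally I would close the argument with Lemma \ref{perp-of-total-field}. Setting $g=\eta_1-\eta_2\in L^\infty(\partial D)\subset L^2(\partial D)$, the pointwise identity $g\,u(\cdot,y)=0$ a.e.\ on $\partial D$ holds for every $y\in C$, hence $\int_{\partial D} g\,u(\cdot,y)\,\text{d}s=0$ for a.e.\ $y\in C$; that is, $g\in\mathcal{N}^{\perp}$. Since $\mathcal{N}^{\perp}=\{0\}$, we conclude $g=0$, i.e.\ $\eta_1=\eta_2$. I expect the main obstacle to be the first step, namely rigorously passing from data prescribed only on the measurement surface $C$ to equality of the fields on all of $\R^d\setminus\overline{D}$, which rests on exterior radiating uniqueness and unique continuation; the interior step genuinely requires the non-eigenvalue assumption, while the concluding density step is immediate once the fields have been matched.
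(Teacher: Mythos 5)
Your argument is correct and follows essentially the same route as the paper: match the scattered fields in the exterior, use the non-eigenvalue assumption to match the interior Cauchy data, subtract the conductive boundary conditions to get $(\eta_1-\eta_2)u(\cdot,y)=0$ on $\partial D$, and invoke Lemma \ref{perp-of-total-field}. The only difference is that you spell out the Rellich/unique-continuation step that the paper states without justification, which is a welcome addition rather than a deviation.
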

\begin{proof}
To prove the claim assume that there are two conductivity parameters $\eta_j$ for $j=1,2$ such that the set of NF measurements coincide. This implies that the corresponding total fields $u_j$ must coincide in the exterior of $D$. By the continuity of the trace across $\partial D$ and since $k^2$ is not an eigenvalue we then conclude that $u=u_1 = u_2$ in $\R^d$. The conductivity condition on $\partial D$  
$$ \partial_\nu u^+ -{\partial_{\nu} u^-}  + \eta_1 u^+ = 0 \quad \text{and } \quad  \partial_\nu u^+ -{\partial_{\nu} u^-}  + \eta_2 u^+= 0$$
on $\partial D$ gives that $(\eta_1 - \eta_2) u( \cdot \, , \, y)=0$ for a.e. $y\in C$. So we have that $(\eta_1 - \eta_2) \in \mathcal{N}^{\perp}$ and by Lemma \ref{perp-of-total-field} we can conclude that $\eta_1= \eta_2$ on $\partial D$. 
\end{proof}

\subsection{Reconstruction algorithm for a constant refractive index }

We now derive a reconstruction method for the conductivity parameter $\eta$ from the NF-data. To this end, recall that the conductive boundary condition is given by 
$$\partial_\nu u^+ -{\partial_{\nu} u^-}  + \eta(x)  u^+= 0 \quad \text{ on } \, \, \partial D.$$
Therefore, we have that if $n(x)$ for $x \in D$ is known then we can determine the exterior and interior Cauchy data for the total field $u$ on the boundary of $\partial D$.  For the case of NF-data we can assume that the scattered field 
$$u^s(x ,y) = \int_{\partial D} \Phi_k (x ,z) f(z) \,  \text{d}s(z) \quad \text{ where } \, \, f=f( \cdot \, ; y) \in L^2(\partial D)$$
for all $y \in C$ and $x \in \R^d \setminus \overline{D}$. Here $\Phi_k (x ,z)$ is yet again the fundamental solution to the Helmholtz equation with wave number $k$. Therefore, we have that for any $x \in C$ 
\begin{align}
u^s( \cdot \, ,y)= Af \quad \text{ where } \quad (Af)(x)= \int_{\partial D} \Phi_k (x ,z) f(z) \,  \text{d}s(z) \Big|_{C} : L^2(\partial D) \longmapsto L^2(C). \label{scat-recon}
\end{align}
Notice, that since the kernel of the integral operator is analytic we have that $A$ is compact and it is clear that if $k^2$ is not a Dirichlet eigenvalue of $-\Delta$ in $D$ then we also have that $A$ is injective. Since the dual operator 
$$ (A^\top g)(z) = \int_{C} \Phi_k (x ,z) g(x) \,  \text{d}s(x) \Big|_{\partial D} : L^2(C) \longmapsto L^2(\partial D)$$ 
is injective due to the uniqueness of the exterior Dirichlet problem to Helmholtz equation. This gives that the operator $A$ has a dense range. This gives that the solution to \eqref{scat-recon} $f \in L^2(\partial D)$ can be determined by a regularization technique. 
 
Now that the scattered field has been determined by solving \eqref{scat-recon} we can conclude that the total field $u$ is known in the exterior of scatterer $D$ and is given by 
$$ u =  \int_{\partial D} \Phi_k (x ,z) f(z) \,  \text{d}s(z) + \Phi_k (x ,y) \quad \text{ for all } x \in \R^d \setminus \overline{D} \quad \text{and} \quad y \in C.$$
Therefore, using the jump relations we have that the exterior Neumann data is given by 
$$ \partial_{\nu} u^+=  \int_{\partial D} \partial_{\nu (x)}  \Phi_k (x ,z) f(z) \,  \text{d}s(z) - \frac{1}{2} f(x)+ \partial_{\nu (x)} \Phi_k (x ,y) \quad \text{ for all } x \in \partial  D \quad  y \in C.$$

We only need to determine the interior Neumann data for the total field to recover the conductivity. To this end, we can use the interior Dirichlet to Neumann mapping 
$$ T : H^{1/2}(\partial D) \longmapsto H^{-1/2}(\partial D) \quad \text{such that} \quad Tf = \partial_\nu w \big|_{\partial D}$$
where 
$$  \Delta w +k^2 n w= 0 \quad \text{ in } \, \, D \quad \text{and } \quad w=f \quad \text{on } \, \, \partial D.$$
Then, we have that $\partial_\nu u^- = Tu^+$ and $T$ can be written using boundary integral operators. Indeed, for simplicity assume that $n$=constant. We define the boundary integral operators 
$$ (V \varphi)(x) =  \int_{\partial D} \Phi_{k\sqrt{n}} (x ,z) \varphi(z) \,  \text{d}s(z) : H^{-1/2}(\partial D) \longmapsto H^{1/2}(\partial D)$$ 
and 
$$ (K \psi)(x) = \int_{\partial D} \partial_{\nu(z)} \Phi_{k\sqrt{n}} (x ,z) \psi(z) \,  \text{d}s(z) : H^{1/2}(\partial D) \longmapsto H^{1/2}(\partial D)$$
for $x \in \partial D$. Therefore, by appealing to Green's Representation in $D$ and the jump relations we have that 
$$T  f =V^{-1} \left( \frac{1}{2}I +K \right) f  \quad \text{ for all } \, \, f \in  H^{1/2}(\partial D)$$
provided that $k^2n$ is not a Dirichlet eigenvalue of $-\Delta$ in $D$. 
One can consider two methods for solving for the conductivity {for}  $j=1, \cdots ,N$ {with}  $x_j \in \partial D$
$$\text{Direct Solution:  } \quad   \eta(x_j)   =    \frac{  (Tu^+)(x_j)  -   \partial_{\nu} u^+ (x_j)  }{ u^+ (x_j) }$$
and 
$$\text{Least Squares Solution: } \quad  \min\limits_{\eta (x)  \in L^{\infty}(\partial \Omega)} \sum\limits_{j=1}^N \Big| \partial_{\nu} u^+ (x_j)  - (Tu^+)(x_j) + \eta(x_j){ u^+ (x_j) }  \Big|^2.$$

\section{The conductive eigenvalue problem}

In this section, we consider the transmission eigenvalue problem for a material with a conductive boundary. We will consider the case of an absorbing material and a non-absorbing material. In the case of an absorbing scatterer we prove the existence and discreteness of the eigenvalues. For the non-absorbing case we focus on studying the behavior of the eigenvalues as the conductivity tends to zero.   
To study this eigenvalue problem we consider the variational problem associated with \eqref{teprobu3}--\eqref{teprobu4}. Therefore, taking a test function $\varphi \in H^2(D) \cap H^1_0(D)$ yields 
\begin{align*}
\hspace*{-1cm} 0&= \int\limits_D  \overline{\varphi} (\Delta+k^2)\frac{1}{n-1}(\Delta u +k^2 n u) \, \text{d}x \\
   \iff 0&=  \int\limits_D   \overline{\varphi}   \Delta \left( \frac{1}{n-1}(\Delta u +k^2n u)  \right) \, \text{d}x + k^2 \int\limits_D \frac{1}{n-1} \overline{\varphi} (\Delta u +k^2 nu)  \, \text{d}x. 
\end{align*}
By Green's second theorem we have that 
\begin{align*}      
\hspace*{-1cm} 0&=  \int\limits_D  \Delta \overline{\varphi} \left( \frac{1}{n-1}(\Delta u +k^2 nu)  \right) \, \text{d}x + k^2 \int\limits_D \frac{1}{n-1}  \overline{\varphi} (\Delta u +k^2 n u)  \, \text{d}x\\
   &\hspace{0.5in} -  \int\limits_{\partial D} \frac{\partial \overline{\varphi} }{\partial \nu}  \left( \frac{1}{n-1}(\Delta u +k^2 n u)  \right) \, \text{d}s.
\end{align*}
Applying the conductivity condition we have that    
 \begin{align}        
0 =  \int\limits_D \frac{1}{ n-1 }(\Delta u +k^2 nu) (\Delta \overline{\varphi} +k^2 \overline{\varphi}) \, \text{d}x +  \int\limits_{\partial D} \frac{k^2}{\eta} \frac{\partial u}{\partial \nu} \frac{\partial \overline{\varphi} }{\partial \nu} \, \text{d}s. \label{TE-varform} 
\end{align}

In \cite{te-cbc} the variational form given in \eqref{TE-varform} was studied for the case when $n$ and $\eta$ are real-valued. In most applications both $n$ and $\eta$ can be complex-valued. So we begin by considering the case of an absorbing material with complex valued conductivity. It is shown numerically that the transmission eigenvalues can be determined from the FF-data in \cite{te-cbc} for a non-absorbing material and following the analysis in \cite{cchlsm} one can prove the following results using either the far field or near field operator, defined in \eqref{FF-operator} and \eqref{NF-operator} respectively.  
\begin{theorem} \label{TEindata}
Assume that ${g_{z,\delta}}$ satisfies either 
$$ \| F{g_{z,\delta}}  - \phi_z  \|_{L^2(\mathbb{S})}  \rightarrow 0 \quad \text{ or }  \quad    \| N{g_{z,\delta}} - \Phi_{k} (\cdot \,  ,z) \|_{L^2(C)} \rightarrow 0 \quad \text{ as } \, \, \delta \rightarrow 0.$$
Let $k$ be a real transmission eigenvalue and assume that either $\sup_{x \in D} n(x)>1$ or $0<\inf_{x \in D} n(x)<1$. Then for a.e. $z \in D$,  $\left\|{g_{z,\delta}}\right\|_{L^2}$ can not be bounded as $\delta \rightarrow 0$. 
\end{theorem}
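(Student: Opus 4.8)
The plan is to argue by contradiction. Suppose $k$ is a real transmission eigenvalue and yet, for $z$ in a set of positive measure, the norms $\|g_{z,\delta}\|_{L^2}$ remain bounded as $\delta\to 0$. I will work out the far field version; the near field case is identical after replacing $F$, $\phi_z$, $\mathbb{S}$ by $N$, $\Phi_k(\cdot,z)$, $C$ and invoking the near field factorization. Since the far field operator $F$ in \eqref{FF-operator} has a smooth kernel and is therefore compact, a bounded sequence $g_{z,\delta}$ has a weakly convergent subsequence $g_{z,\delta}\rightharpoonup g$, and compactness upgrades this to $Fg_{z,\delta}\to Fg$ strongly. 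Combined with the defining hypothesis $\|Fg_{z,\delta}-\phi_z\|\to 0$, this forces the exact identity $Fg=\phi_z$. Thus the contradiction hypothesis manufactures a genuine solution $g$ of the far field equation, and the whole statement reduces to showing that, at a transmission eigenvalue, the equation $Fg=\phi_z$ has no solution for a.e.\ $z\in D$.

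Next I would transfer this identity to the physical fields. Let $v_g$ be the Herglotz wave with kernel $g$ and let $u^s_g$ be the scattered field produced by the incident field $v_g$ through \eqref{direct1}--\eqref{SRC}; by superposition $Fg$ is exactly the far field pattern of $u^s_g$. The identity $Fg=\phi_z$ says $u^s_g$ and $\Phi_k(\cdot,z)$ share a far field pattern, so Rellich's lemma and unique continuation give $u^s_g=\Phi_k(\cdot,z)$ throughout $\R^d\setminus\overline{D}$. I then read off the Cauchy data of $\Phi_k(\cdot,z)$ on $\partial D$ and feed them into the conductive transmission conditions \eqref{direct3}. Setting $w:=u_g$ (the interior total field, solving $\Delta w+k^2 n w=0$) and $v:=v_g$ (an entire solution of $\Delta v+k^2 v=0$), the boundary relations become the inhomogeneous version of \eqref{teprob1}--\eqref{teprob2},
\begin{align*}
w-v=\Phi_k(\cdot,z),\qquad \partial_\nu w-\partial_\nu v-\eta v=\partial_\nu\Phi_k(\cdot,z)+\eta\,\Phi_k(\cdot,z)\quad\text{on }\partial D,
\end{align*}
i.e.\ the homogeneous conductive eigenvalue problem driven by boundary data completely determined by the point source $\Phi_k(\cdot,z)$.

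The contradiction is extracted from the Fredholm structure of this boundary value problem. Recasting \eqref{teprob1}--\eqref{teprob2} through the variational form \eqref{TE-varform}, the sign hypothesis $\sup_D n>1$ or $0<\inf_D n<1$ makes the principal part coercive, so the associated operator differs from an invertible one by a compact perturbation (the conductive boundary term $\int_{\partial D}\frac{k^2}{\eta}\partial_\nu u\,\partial_\nu\overline{\varphi}\,\text{d}s$ being compact on $H^2(D)\cap H^1_0(D)$); hence it is Fredholm of index zero. At a transmission eigenvalue the homogeneous problem has a nontrivial solution, so by the Fredholm alternative the inhomogeneous problem above is solvable only if its data is orthogonal to the kernel of the adjoint problem. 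I would compute this orthogonality condition by Green's identities together with the reproducing property $\int_{\partial D}\big(u^*\partial_\nu\Phi_k(\cdot,z)-\Phi_k(\cdot,z)\partial_\nu u^*\big)\,\text{d}s=u^*(z)$ of the fundamental solution, reducing the compatibility condition to the vanishing at $z$ of an adjoint eigenfunction (concretely, of the component of the adjoint eigenpair solving $\Delta u^*+k^2 u^*=0$). Since a nontrivial eigenfunction solves an elliptic equation and cannot vanish on a set of positive measure, this fails for a.e.\ $z\in D$, contradicting the solvability just established.

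The main obstacle is the passage in the conductive setting. Unlike the classical transmission problem, the boundary condition \eqref{teprob2} couples $w$ and $v$ through $\eta$, so I must (i) verify that the conductive interior transmission operator really is Fredholm of index zero under the stated sign condition on $n-1$, treating the boundary integral as a genuinely compact perturbation, and (ii) identify the exact solvability condition for this non-self-adjoint inhomogeneous problem and show it collapses to a pointwise constraint of the form $u^*(z)=0$. Once these are in place, the unique continuation argument that eigenfunctions cannot vanish on positive-measure sets closes the proof for a.e.\ $z$, and the near field version follows verbatim with $N$ and $\Phi_k(\cdot,z)$ in place of $F$ and $\phi_z$.
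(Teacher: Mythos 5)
Your strategy is the standard one and is, in fact, the route the paper itself points to without carrying out: the paper offers no proof of Theorem \ref{TEindata}, stating only that it follows from the analysis in the cited reference \cite{cchlsm}, and then substantiates the claim solely by the explicit unit-ball computation that follows the theorem, where the exact solution of $Fg_z=\phi_z$ has Fourier coefficients proportional to $j_p(k|z|)/\lambda_p$ and $\lambda_p\to 0$ as $k$ approaches a root of \eqref{eigzero}. Your first stages are correct and essentially forced: compactness of $F$ (analytic kernel) plus the assumed boundedness of $g_{z,\delta}$ yields a weak limit $g$ with $Fg=\phi_z$ exactly; Rellich's lemma then gives $u^s_g=\Phi_k(\cdot,z)$ in $\R^d\setminus\overline{D}$; and reading off the Cauchy data produces the inhomogeneous conductive transmission problem with data determined by $\Phi_k(\cdot,z)$. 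The mechanism you propose for the endgame --- Fredholm alternative, Green's identity turning the compatibility functional into the value at $z$ of the Helmholtz component of an adjoint eigenfunction, and real-analyticity to rule out vanishing on a set of positive measure --- is the correct skeleton of the argument in \cite{cchlsm}.

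The genuine gap sits exactly where you flagged it, and it is not a routine verification. First, the hypothesis of the theorem as stated, $\sup_D n>1$ or $0<\inf_D n<1$, does \emph{not} make $1/(n-1)$ sign-definite, so the ``coercive principal part plus compact boundary perturbation'' decomposition you invoke is simply not available under those assumptions; your argument needs $n-1$ uniformly one-signed (which is what the paper assumes everywhere else, e.g.\ $n_{min}>1$ in Section 3.1 and $n_1-1\geq\gamma_1>0$ in Section 3.2), so you should either strengthen the hypothesis or say explicitly that you prove the theorem under the stronger sign condition. Second, the Fredholm-alternative step is asserted rather than executed: the conductive problem is non-self-adjoint even for real $n$ and $\eta$, so you must actually identify the adjoint boundary conditions (the $\eta$-term migrates and conjugates), write down the solvability functional applied to the data $\big(\Phi_k(\cdot,z),\ \partial_\nu\Phi_k(\cdot,z)+\eta\Phi_k(\cdot,z)\big)$, and check that after Green's second identity the extra $\eta$-boundary terms cancel against the adjoint boundary condition so that only $v^*(z)$ survives. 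That cancellation is the entire content of adapting \cite{cchlsm} to the conductive setting, and without it the reduction to ``$v^*(z)=0$ for $z$ in a set of positive measure'' is not established. Once it is, the conclusion does follow: $v^*$ is real-analytic, hence $v^*\equiv 0$, hence $w^*$ has vanishing Cauchy data and $w^*\equiv 0$ by unique continuation, contradicting nontriviality of the adjoint eigenpair.
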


\noindent{\bf An explicit example for the unit ball in $\R^3$:} We will now give an explicit example where we assume that the $D$ is the unit sphere. We will show that the solution to the far field equation $F{g_{z}}  = \phi_z$ becomes unbounded as $k$ approaches an
interior transmission eigenvalue. We will now assume that the coefficients $n$ and $\eta$ are constant. Separation of variables gives that the interior conductive eigenvalues satisfy 
$$\text{det}\left(
\begin{array}{cc}
 j_p(k \sqrt{n} ) & \, \, -j_p(k ) \\
 k\sqrt{n}  j'_p(k \sqrt{n} )  & \, \,  -k j'_p(k )- \eta j_p(k )
\end{array}
\right)=0 $$ 
which can be written as 
\begin{eqnarray}
k\sqrt{n} j_p'(k\sqrt{n} )j_p(k )-j_p(k\sqrt{n} )\Big(k j_p'(k )+\eta j_p(k ) \Big)=0 \label{eigzero}
\end{eqnarray}
where $j_p$ denotes the spherical Bessel function of the first kind of order $p$.

Now notice that the direct scattering problem \eqref{direct1}--\eqref{direct3} can be written as: find $u\in H^1(D)$ and $u^s \in H^1_{loc}(\R^3 \setminus \overline{D})$ such that 
\begin{eqnarray*}
 \Delta u +k^2 n u=0 \, \, \text{in } \, \, D \quad \text{and} \quad  \Delta u^s + k^2 u^s=0  \, \, \, &\textrm{ in }& \,\,  \R^3 \setminus \overline{D} \\
u^s-u= - \text{e}^{ \text{i}kx \cdot \hat{y} } \quad  \textrm{ and }  \quad  \frac{\partial u^s}{\partial r} +\eta u^s -  \frac{\partial u}{\partial r}= - \left( \frac{\partial }{\partial r} +\eta \right)  \text{e}^{ \text{i}kx \cdot \hat{y}}\quad &\textrm{ on }& \partial D
\end{eqnarray*}
where $u$ is the total field in $D$ and $u^s$ is the radiating scattered field in $\R^3 \setminus \overline{D}$. We make the ansatz that the solutions can be written as the following series 
$$u^s(x)=\sum\limits_{p=0}^{\infty}  \sum\limits_{m=-p}^{p} \alpha^m_p h^{(1)}_p( k |x|) Y^m_p (\hat x) \quad \text{and} \quad u(x)=\sum\limits_{p=0}^{\infty}  \sum\limits_{m=-p}^{p} \beta^m_p  j_p\left(k \sqrt{{n}} |x| \right) Y^m_p (\hat x)$$
where $\hat{x}=x/|x|$. Recall the Jacobi-Anger series expansion for plane waves given by 
$$\text{e}^{\text{i}kx \cdot \hat{y} }= 4 \pi  \sum\limits_{p=0}^{\infty}  \sum\limits_{m=-p}^{p} \text{i}^p j_p( k |x|) Y^m_p (\hat x) \overline{Y^m_p (\hat{y})}.$$
Here we let $h_p^{(1)}$ denote the spherical Hankel function of the first kind of order $p$ and $Y_p^m$ is the spherical harmonic. Applying the boundary condition on $\partial D$ gives the $2 \times 2$ system 
$$\left[
\begin{array}{rr}
	h_p^{(1)}(k ) & -j_p(k\sqrt{n} )\\
	k h_p^{(1)'}(k )+\eta h_p^{(1)}(k )  & - k\sqrt{n} j_p'(k\sqrt{n} )
\end{array}
\right]\left[
\begin{array}{c}
	\alpha_p^m\\
	\beta_p^m
\end{array}
\right]=-4\pi \text{i}^p \overline{Y_p^m({\hat{y}})}\left[
\begin{array}{l}
	j_p(k )\\
	k j_p'(k )+\eta j_p(k )
\end{array}
\right]\,.$$
Solving the system gives that $\alpha_p^m=-4\pi \text{i}^p \overline{Y_p^m({\hat{y}})} \lambda_p$ where 
\begin{eqnarray*}
 \lambda_p= \frac{ k\sqrt{n} j_p'(k\sqrt{n} )j_p(k ) - j_p(k\sqrt{n})\left(k j_p'(k)+\eta j_p(k) \right) }{k\sqrt{n} j_p'(k\sqrt{n})h_p^{(1)}(k)- j_p(k\sqrt{n})\left(k h_p^{(1)'}(k)+\eta h_p^{(1)}(k)\right)}\,.
\end{eqnarray*}
Therefore the far field pattern is given by (see Theorem 2.16 of \cite{coltonkress})
\begin{eqnarray*}
u^\infty(\hat{x}, \hat{y} )=\frac{1}{k}\sum_{p=0}^\infty \sum_{m=-p}^p \frac{1}{\text{i}^{p+1}} \alpha_p^m Y_p^m(\hat{x}) = \frac{4 \pi \text{i} }{k}  \sum\limits_{p=0}^{\infty}  \sum\limits_{m=-p}^{p} \lambda_p Y^m_p (\hat x) \overline{Y^m_p (\hat{y})}.
\end{eqnarray*}
Using the series expansion of the far field pattern we have that by interchanging summation and integration 
\begin{eqnarray*}
({F}g_z)(\hat{x})=\int_{\mathbb S} u^\infty(\hat{x}, \hat{y})g(\hat{y}) \, \text{d}s(\hat{y}) = \frac{4 \pi \text{i} }{k} \sum\limits_{p=0}^{\infty}  \sum\limits_{m=-p}^{p} \lambda_p g_p^m Y^m_p (\hat x)
\end{eqnarray*}
where we define $g^m_p=(g \, , \, Y^m_p (\hat{y}))_{L^2({\mathbb S})}$ the Fourier coefficients of $g$. This implies that the solution to the far field equation is given by 
$$g_z(\hat{y})=  \frac{k}{ \text{i} }  \sum\limits_{p=0}^{\infty}   \sum\limits_{m=-p}^{p} \text{i}^{p} \, \frac{j_p(k|z|)}{\lambda_p} \, \overline{Y^m_p (\hat z)}\, Y^m_p (\hat{y}).$$
Notice that $\lambda_p \rightarrow 0$ as $k \rightarrow k_p$ where $k_p$ is an eigenvalue corresponding to a solution of \eqref{eigzero}. This implies that at least one of the Fourier coefficients $g^m_p$ becomes unbounded which gives that $\left\| g_z \right\|^2_{L^2({\mathbb S})} \rightarrow \infty$ as $k$ approaches an eigenvalue. 

\subsection{Convergence of the eigenvalues as $\eta \to 0^+$}\label{convergence}
In this section, we consider the limiting case as $\| \eta(x) \|_{L^{\infty}(\partial D)}\rightarrow 0$ for the eigenvalue problem \eqref{teprob1}--\eqref{teprob2}. 
We will prove that the real eigenvalues $k_\eta$ converge to the eigenvalues $k_0$ where $\eta=0$. We will also consider the convergence of the eigenfunctions. In our analysis, we will focus on the case where $n_{min}>1$ (the analysis for $0<n_{max}<1$ follows from similar arguments).

We consider the transmission eigenvalue problem \eqref{teprobu3}--\eqref{teprobu4} for a nontrivial function in $H^2(D) \cap H^1_0(D)$. Now, define the following bounded sesquilinear forms $\mathcal{A}_{ \eta , k }(\cdot \, , \cdot)$ and $\mathcal{B}(\cdot \, , \cdot)$ on $H^2(D) \cap H^1_0(D) \times H^2(D) \cap H^1_0(D)$ by 
\begin{align}
 \mathcal{A}_{ \eta , k }(u,\varphi) =  \int\limits_D  \frac{1}{ n-1} ( \Delta u +k^2 u )( \Delta \overline{\varphi} +k^2 \overline{\varphi} ) +k^4 u \overline{\varphi}  \, \text{d}x +  \int\limits_{\partial D} \frac{k^2}{\eta} \frac{\partial u}{\partial \nu} \frac{\partial \overline{\varphi}}{\partial \nu} \, \text{d}s \label{bad1} 
\end{align}
and
\begin{align}
\mathcal{B}(u,\varphi) = \int\limits_D \grad u \cdot \grad \overline{\varphi}  \, \text{d}x. \label{bad3}
\end{align}
Therefore, we have that the pair $(k_\eta , u_\eta) \in \R^+ \times H^2(D) \cap H^1_0(D)$ is an eigenpair provided that 
\begin{equation}
\mathcal{A}_{\eta , k_\eta } (u_\eta ,\varphi) -k_\eta^2\mathcal{B}(u_\eta ,\varphi) =0 \quad \text{ for all } \quad  \varphi \in H^2(D) \cap H^1_0(D). \label{bilinearformprob}
\end{equation}
For the case where $\eta = 0$ we define 
\begin{align}
\hspace*{-1cm} \mathcal{A}_{0,k}(u,\varphi) &=  \int\limits_D  \frac{1}{ n-1} ( \Delta u +k^2 u )( \Delta \overline{\varphi} +k^2 \overline{\varphi} ) +k^4 u \overline{\varphi}  \, \text{d}x
\end{align}
and the pair $(k_0 , u_0) \in \R^+ \times H^2_0(D)$ is an eigenpair for $\eta=0$ provided that 
\begin{equation}
\mathcal{A}_{ 0, k_0 } (u_0 ,\varphi) -k_0^2\mathcal{B}(u_0 ,\varphi) =0 \quad \text{ for all } \quad  \varphi \in H^2_0(D)  \label{bilinearformprob2}
\end{equation}
where we define the Sobolev space 
$$H^2_0(D)  = \left\{u\in H^2(D) \, : \, u= \frac{ \partial u}{\partial \nu} = 0 \, \text{ on } \, \, \partial D \right\}$$
equipped with the $H^2(D)$ norm. 

The real interior conductive eigenvalues satisfy 
\begin{equation}
\lambda_j (k , \eta)-k^2=0 \quad \text{ for all} \quad \eta \geq 0 \label{teveq}
\end{equation}
where  
\begin{align}
\lambda_j(k, \eta) = \min\limits_{ U \in \mathcal{U}_j} \max\limits_{u \in U\setminus \{ 0\} }  \frac{ \mathcal{A}_{\eta , k } (u,u) }{  \mathcal{B}(u,u)} \quad \text{ for all} \quad \eta \geq 0. \label{maxmin}
\end{align}
Here $\mathcal{U}_j$ is the set of all $j$-dimensional subspaces of $H^2(D) \cap H^1_0(D)$. By Theorem 5.1 and Corollary 5.3 in \cite{te-cbc} we have that the 
interior conductive eigenvalues $k_\eta$ are an increasing sequence of $\eta$ provided that $\eta$ is decreasing. We now show that the sequence of interior 
conductive eigenvalues are bounded with respect to $\eta>0$.

\begin{theorem}
There exists an infinite sequence of eigenvalues $k^{(j)}_\eta$ for $\eta>0$ with $j \in \N$ satisfying $0 < k^{(j)}_\eta  \leq k^{(j)}_0$ where $k^{(j)}_0$ is an
interior transmission eigenvalue for $\eta=0$. 
\end{theorem}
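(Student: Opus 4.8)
The plan is to realize each eigenvalue $k_\eta^{(j)}$ as a crossing of the parametrized Rayleigh value $\lambda_j(k,\eta)$ from \eqref{maxmin} with the line $k^2$, and to compare the $\eta>0$ curves against the $\eta=0$ curve by a nested-subspace argument. First I would fix the generalized eigenvalue framework: since $n_{min}>1$ gives $n-1\geq\gamma>0$ a.e., a standard compactness/contradiction argument shows that $\mathcal{A}_{\eta,k}$ in \eqref{bad1} is coercive on $H^2(D)\cap H^1_0(D)$ for each fixed $k>0$, while at $k=0$ one has $\mathcal{A}_{\eta,0}(u,u)=\int_D\frac{1}{n-1}|\Delta u|^2\,dx$, which is coercive by the equivalence of $\|\Delta u\|_{L^2}$ and $\|u\|_{H^2}$ on these spaces. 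At the same time $\mathcal{B}$ in \eqref{bad3} is bounded and, because $H^2(D)\hookrightarrow H^1(D)$ is compact, the associated solution operator is compact; hence for each $k$ the $\lambda_j(k,\eta)$ form a positive, nondecreasing, unbounded sequence and $k_\eta^{(j)}$ is by definition a solution of \eqref{teveq}. I would also record that $k\mapsto\lambda_j(k,\eta)$ is continuous, since the coefficients of $\mathcal{A}_{\eta,k}$ depend polynomially on $k$, so the max over any fixed finite-dimensional $U$ is continuous in $k$, and this passes to the min-max.

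The conceptual core is the comparison $\lambda_j(k,\eta)\leq\lambda_j(k,0)$ for every $k>0$ and $\eta>0$. The point is that $H^2_0(D)\subset H^2(D)\cap H^1_0(D)$ and that, on $H^2_0(D)$, the normal derivative vanishes, so the boundary term $\int_{\partial D}\frac{k^2}{\eta}|\partial_\nu u|^2\,ds$ in \eqref{bad1} is identically zero; thus $\mathcal{A}_{\eta,k}(u,u)=\mathcal{A}_{0,k}(u,u)$ for every $u\in H^2_0(D)$. Restricting the minimization in \eqref{maxmin} from all $j$-dimensional subspaces of $H^2(D)\cap H^1_0(D)$ to the smaller family of $j$-dimensional subspaces of $H^2_0(D)$ can only increase the minimum, and on that smaller family the two forms agree; this yields exactly $\lambda_j(k,\eta)\leq\lambda_j(k,0)$, where $\lambda_j(\cdot,0)$ is the min-max characterizing \eqref{bilinearformprob2}. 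This direct inequality produces the bound $k_\eta^{(j)}\leq k_0^{(j)}$ without invoking the $\eta$-monotonicity cited from \cite{te-cbc}.

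I would then close the argument by the intermediate value theorem. Set $f_j(k)=\lambda_j(k,\eta)-k^2$. As $k\to0^+$ the boundary term and the $k^4$ term vanish and, by continuity, $f_j(k)\to\lambda_j(0,\eta)$, which is the min-max of the quotient $\int_D\frac{1}{n-1}|\Delta u|^2\,dx\,/\,\int_D|\grad u|^2\,dx$ and is strictly positive by the coercivity of the bi-Laplacian term; hence $f_j(k)>0$ for small $k>0$. At the other endpoint, $k_0^{(j)}$ satisfies $\lambda_j(k_0^{(j)},0)=(k_0^{(j)})^2$, so the comparison gives $f_j(k_0^{(j)})=\lambda_j(k_0^{(j)},\eta)-(k_0^{(j)})^2\leq\lambda_j(k_0^{(j)},0)-(k_0^{(j)})^2=0$. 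Continuity of $f_j$ then produces a zero $k_\eta^{(j)}\in(0,k_0^{(j)}]$, which is the sought eigenvalue. The infinite sequence follows because the $\eta=0$ transmission eigenvalues $k_0^{(j)}$ form an infinite sequence tending to infinity, so infinitely many indices $j$ are available; moreover no value can repeat for infinitely many $j$, since $\lambda_j(k,\eta)\to\infty$ as $j\to\infty$ at fixed $k$, which forces accumulation only at infinity.

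I expect the main obstacle to be the analytic bookkeeping near $k=0$ and the continuity of the min-max values, rather than the comparison itself: one must verify that $\lambda_j(\cdot,\eta)$ is genuinely continuous up to $k=0$ despite the $k$-dependence sitting in the coefficients of the top-order form, and that $\lambda_j(0,\eta)$ is strictly positive so that $f_j$ starts above zero. Both rest on the coercivity estimate for $\int_D\frac{1}{n-1}|\Delta u|^2\,dx$, which is precisely where the hypothesis $n_{min}>1$ (equivalently $n-1\geq\gamma>0$) is essential. The comparison inequality, by contrast, is soft: it uses only the inclusion $H^2_0(D)\subset H^2(D)\cap H^1_0(D)$ together with the vanishing of the conductive boundary term on $H^2_0(D)$.
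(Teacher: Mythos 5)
Your proposal is correct and follows essentially the same route as the paper: the comparison $\lambda_j(k,\eta)\leq\lambda_j(k,0)$ evaluated at $k=k_0^{(j)}$, positivity of $\lambda_j(k,\eta)-k^2$ near $k=0$, and the intermediate value theorem. The only difference is that you prove inline the two ingredients the paper imports from its reference \cite{te-cbc} --- the nested-subspace argument for the Rayleigh-quotient inequality and the coercivity-based lower bound near $k=0$ --- which is a sound (and arguably more self-contained) way to supply them.
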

\begin{proof} 
Just as in Theorem 5.1 in \cite{te-cbc} we have that the Rayleigh quotient \eqref{maxmin} implies that $\lambda_j(k, \eta) \leq \lambda_j(k, 0)$ for all $\eta > 0$. Therefore, we can conclude that
$$\lambda_j(k_0 , \eta) -k_0^2 \leq \lambda_j(k_0 , 0) -k_0^2=0$$ 
where $k_0$ is an interior transmission eigenvalue for $\eta=0$. Recall that the real interior transmission eigenvalues satisfy \eqref{teveq} 
and that the mapping 
$$k \longmapsto \lambda_j(k , \eta) -k^2 \quad \text{for all} \quad \eta \geq 0$$
is continuous for $k \in (0 , \infty)$. 
By Theorem 4.3 in \cite{te-cbc} we have that there exists a $\delta$ (independent of $\eta$) such that for all $k > \delta$ 
$$\lambda_j(k , \eta) -k^2 >0$$
and therefore we can conclude that at least one root of \eqref{teveq} is in the interval $(\delta , k_0]$ for every $j \in \N$, proving the claim. 
\end{proof}

We now have that the sequence of interior conductive  eigenvalues $\{ k_\eta \}_{\eta > 0}$ is a bounded monotonic sequence and is therefore convergent (up to a subsequence if $\eta$ is not decreasing). Now let $ k \in \R^+$ be such that $k_\eta \rightarrow k$ as $\eta \rightarrow 0$. Since the corresponding eigenfunctions $u_\eta$ are nontrival we assume that they are normalized in the $H^1(D)$ norm. In \cite{te-cbc} it is shown that $\mathcal{A}_{\eta , k } (\cdot \, ,\cdot)$ is coercive on $H^2(D) \cap H^1_0(D)$ where the coercivity constant is independent of $k$ and $\eta$. Therefore, we have that there is a constant $\alpha >0$ where 
$$\alpha \| u_\eta \|^2_{H^2(D)} \leq \mathcal{A}_{\eta , k_\eta } (u_\eta ,u_\eta) =k_\eta^2\mathcal{B}(u_\eta ,u_\eta)= k_\eta^2\| \grad u_\eta||^2_{L^2(D) }\leq k^2_0$$
which gives that $u_\eta$ is a bounded sequence in $H^2(D)$ and is therefore weakly convergent to some $u \in H^2(D) \cap H^1_0(D)$ (strongly in $H^1(D)$). Notice, that since $\| u_\eta \|_{H^1(D)}=1$ for all $\eta >0$ the strong $H^1(D)$ convergence implies that the limit has unit $H^1(D)$ norm and therefore $u \neq 0$. 

We now want to show that the limiting pair $(k ,u)$ is a transmission eigenpair for $\eta=0$. Notice that since $u_\eta \in H^2(D) \cap H^1_0(D)$ the compact embedding of $H^{1/2}(\partial D)$ into $L^2(\partial D)$ implies that ${\partial_\nu u_\eta}$ converges strongly to ${\partial_\nu u}$ in $L^2(\partial D)$. By equation \eqref{bilinearformprob} we obtain that  
$$k^2_\eta \int\limits_{\partial D} \frac{1}{ \eta } \left| \frac{\partial u_\eta }{\partial \nu} \right|^2 \, \text{d}s = k_\eta^2\mathcal{B}(u_\eta ,u_\eta) - \mathcal{A}_{0, k_\eta } (u_\eta ,u_\eta).$$
Using the continuity of the sequilinear forms and the fact that both $k_\eta$ and $u_\eta$ are bounded sequences we conclude that 
$$0 \leq \, \, k^2_\eta \left\| \frac{\partial u_\eta }{\partial \nu} \right\|^2_{L^2(\partial D)} \leq C\, \eta_{max} \quad \text{where} \quad \sup\limits_{\partial D} \eta (x) = \eta_{max} >0.$$
Notice that the constant $C$ depends only on $n$, $k_0$ and $\alpha$. 
Notice, that the above inequality gives that ${\partial_\nu u_\eta}$ converges strongly to zero in $L^2(\partial D)$ which implies that $u \in H^2_0(D)$. Therefore, by the convergence of the eigenvalues $k_\eta$ and weak convergence of the eigenfunctions $u_\eta$ if we take any $\varphi \in H^2_0(D)$ 
\begin{align*}
 0 = \lim\limits_{\eta \rightarrow 0}  \mathcal{A}_{\eta, k_\eta} (u_\eta ,\varphi) -k_\eta^2\mathcal{B}(u_\eta ,\varphi) = \, \,  \mathcal{A}_{0 , k} (u ,\varphi) -k^2\mathcal{B}(u ,\varphi).
\end{align*}        
By the above analysis we have the following result. 

\begin{theorem} \label{conv-te-1}
Let $(k_\eta , u_\eta) \in \R^+ \times H^2(D) \cap H^1_0(D)$ be an eigenpair for $\eta > 0$. If $k_\eta$ is bounded with respect to $\eta$ then as $\| \eta(x) \|_{L^{\infty}(\partial D)}\rightarrow 0$ there is a subsequence of $k_\eta$ and $u_\eta$ such that $k_\eta \rightarrow k_0$ and $u_\eta$ converges weakly to $u_0$ in $H^2(D)$ where $(k_0 , u_0) \in \R^+ \times H^2_0(D)$ is an eigenpair for $\eta=0$. 
\end{theorem}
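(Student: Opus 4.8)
The plan is to combine a compactness argument with passage to the limit in the variational identity \eqref{bilinearformprob}, the crucial point being to show that the weak limit of the eigenfunctions actually lies in the smaller space $H^2_0(D)$ rather than merely in $H^2(D)\cap H^1_0(D)$. Since the eigenfunctions are determined only up to scaling, I would first normalize them by $\|u_\eta\|_{H^1(D)}=1$. The boundedness and monotonicity of $\{k_\eta\}$ established above guarantee that a subsequence satisfies $k_\eta \to k_0$, and the lower bound $\delta$ coming from Theorem~4.3 of \cite{te-cbc} keeps $k_0 \geq \delta > 0$, so the limiting wave number lies in $\R^+$.

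Next I would produce a weakly convergent subsequence of eigenfunctions. Taking $\varphi=u_\eta$ in \eqref{bilinearformprob} and invoking the uniform coercivity of $\mathcal{A}_{\eta,k}$ on $H^2(D)\cap H^1_0(D)$ (with constant $\alpha$ independent of $k$ and $\eta$) gives
$$\alpha\,\|u_\eta\|_{H^2(D)}^2 \leq \mathcal{A}_{\eta,k_\eta}(u_\eta,u_\eta)=k_\eta^2\,\mathcal{B}(u_\eta,u_\eta)=k_\eta^2\,\|\grad u_\eta\|_{L^2(D)}^2 \leq k_0^2,$$
so $\{u_\eta\}$ is bounded in $H^2(D)$. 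Hence a subsequence converges weakly to some $u\in H^2(D)\cap H^1_0(D)$ and, by the compact embedding $H^2(D)\hookrightarrow H^1(D)$, strongly in $H^1(D)$; the normalization then forces $\|u\|_{H^1(D)}=1$, so $u\neq 0$.

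The main obstacle is the third step: showing $\partial_\nu u=0$ on $\partial D$, i.e.\ $u\in H^2_0(D)$. Here I would exploit the penalization structure of the boundary term. Isolating it in \eqref{bilinearformprob} with $\varphi=u_\eta$ yields
$$k_\eta^2\int\limits_{\partial D}\frac{1}{\eta}\left|\frac{\partial u_\eta}{\partial \nu}\right|^2\,\text{d}s = k_\eta^2\,\mathcal{B}(u_\eta,u_\eta)-\mathcal{A}_{0,k_\eta}(u_\eta,u_\eta),$$
whose right-hand side is uniformly bounded by continuity of the forms together with the bounds on $k_\eta$ and $u_\eta$. Using $\eta\leq\eta_{max}=\|\eta\|_{L^\infty(\partial D)}$ a.e.\ then gives $k_\eta^2\,\|\partial_\nu u_\eta\|_{L^2(\partial D)}^2 \leq C\,\eta_{max}$ with $C$ depending only on $n$, $k_0$ and $\alpha$. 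Since $k_\eta\to k_0>0$, this shows $\|\partial_\nu u_\eta\|_{L^2(\partial D)}\to 0$; combined with the strong $L^2(\partial D)$ convergence $\partial_\nu u_\eta\to \partial_\nu u$ furnished by the compact trace embedding $H^{1/2}(\partial D)\hookrightarrow L^2(\partial D)$, I conclude $\partial_\nu u=0$ and hence $u\in H^2_0(D)$. This is exactly where the singular weight $1/\eta$ does the work: letting $\eta\to 0$ converts the conductive boundary condition into the clamped condition encoded in $H^2_0(D)$ in the limit.

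Finally I would pass to the limit in \eqref{bilinearformprob}. For a fixed test function $\varphi\in H^2_0(D)$ the boundary integral in $\mathcal{A}_{\eta,k_\eta}$ vanishes because $\partial_\nu\varphi=0$, so the identity reduces to $\mathcal{A}_{0,k_\eta}(u_\eta,\varphi)-k_\eta^2\mathcal{B}(u_\eta,\varphi)=0$. Since $\Delta u_\eta\weakc\Delta u$ weakly and $u_\eta\to u$ strongly in $L^2(D)$, while $k_\eta\to k_0$ makes the $k$-dependent coefficients (and the bounded factor $1/(n-1)$, using $n_{min}>1$) converge, each term passes to its limit and I obtain $\mathcal{A}_{0,k_0}(u,\varphi)-k_0^2\mathcal{B}(u,\varphi)=0$ for all $\varphi\in H^2_0(D)$. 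Thus $(k_0,u)$ solves \eqref{bilinearformprob2}, i.e.\ it is a nontrivial eigenpair for $\eta=0$ with $u\in H^2_0(D)$, which completes the argument.
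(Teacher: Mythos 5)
Your proposal is correct and follows essentially the same route as the paper: normalize in $H^1(D)$, use the uniform coercivity of $\mathcal{A}_{\eta,k}$ to extract a weakly convergent subsequence with nonzero limit, isolate the $\frac{1}{\eta}$ boundary term to force $\partial_\nu u_\eta \to 0$ in $L^2(\partial D)$ and hence $u\in H^2_0(D)$, and pass to the limit in the variational identity against test functions in $H^2_0(D)$. Your explicit remark that $k_0\geq\delta>0$ is needed to convert the bound $k_\eta^2\|\partial_\nu u_\eta\|^2_{L^2(\partial D)}\leq C\,\eta_{max}$ into $\|\partial_\nu u_\eta\|_{L^2(\partial D)}\to 0$ is a small point the paper leaves implicit, but otherwise the arguments coincide.
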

Theorem \ref{conv-te-1} verifies the conjecture in \cite{te-cbc} that the eigenvalues converge as $\eta \rightarrow 0$ to the corresponding eigenvalues for $\eta=0$. In \cite{te-cbc} there are some numerical experiments that indicate that the order of convergence is one.

We will now show that the eigenfunctions converge strongly with respect to the $H^2(D)$ norm. To prove the strong convergence we will use the coercivity of the sequilinear form $\mathcal{A}_{\eta , k } (\cdot \, ,\cdot)$. To this end, let $(k_\eta , u_\eta) \in \R^+ \times H^2(D) \cap H^1_0(D)$ be an
interior transmission eigenpair for $\eta > 0$ and $(k_0 , u_0) \in \R^+ \times H^2_0(D)$ is the limit corresponding to an interior transmission eigenpair for $\eta=0$. Now for $\varphi \in H^2(D) \cap H^1_0(D)$ we have that 
\begin{align*}
\mathcal{A}_{ \eta ,  k_\eta }(u_\eta -u_0, \varphi) &=  \mathcal{A}_{ \eta , k_\eta }(u_\eta , \varphi) -\mathcal{A}_{ 0, k_\eta }(u_0, \varphi) \\
            						&= k^2_\eta \mathcal{B}(u_\eta , \varphi) -k_0^2\mathcal{B}(u_0 , \varphi) + \big( \mathcal{A}_{ 0, k_0 }  -\mathcal{A}_{ 0, k_\eta }\big) (u_0, \varphi)\\
						        &=\big( k^2_\eta -k_0^2\big) \mathcal{B}(u_\eta , \varphi) + k_0^2 \mathcal{B}(u_\eta -u_0 , \varphi) + \big( \mathcal{A}_{ 0, k_0 }  -\mathcal{A}_{ 0, k_\eta }\big) (u_0, \varphi)
\end{align*}
where we have used the variational formulation of the transmission eigenvalue problems \eqref{bilinearformprob} and \eqref{bilinearformprob2} along with the fact that $u_0$ is in $H^2_0(D)$. We now estimate the terms on the right hand side where $\varphi = u_\eta -u_0$. Therefore, simple calculations gives that 
\begin{align*}
 &\big( \mathcal{A}_{ 0, k_0 }  -\mathcal{A}_{ 0, k_\eta }\big) (u_0, \varphi) \\
 &\hspace{0.6in}=  \big(k^2_0 -k_\eta^2 \big) \int\limits_D \frac{1}{n-1} (  \overline{\varphi} \, \Delta u_0 + u_0 \, \Delta \overline{\varphi}) \, \text{d}x +   \big(k^4_0 -k_\eta^4 \big) \int\limits_D \frac{n}{ n-1 } u_0 \, \overline{\varphi} \, \text{d}x.
\end{align*}
Now, letting $\varphi=u_\eta -u_0$ and using the fact that $u_0$ and $u_\eta$ are bounded with respect to $\eta$ gives that 
$$ \Big| \big( \mathcal{A}_{ 0, k_0 }  -\mathcal{A}_{ 0, k_\eta }\big) (u_0, u_\eta -u_0) \Big| \leq C_1 \big| k^2_0 -k_\eta^2\big| + C_2 \big| k^4_0 -k_\eta^4\big| \longrightarrow 0 \quad \text{as} \, \, \, \, \eta \rightarrow 0$$
where the positive constants $C_1$ and $C_2$ are independent of $\eta$. Similarly we can conclude that 
$$  \Big|  ( k^2_\eta -k_0^2) \mathcal{B}(u_\eta , u_\eta -u_0) \Big|  \leq C_3 \big| k^2_0 -k_\eta^2 \big|  \longrightarrow 0 \quad \text{as} \, \, \, \, \eta \rightarrow 0$$
where the positive constant $C_3$ is independent of $\eta$. Using the fact that $u_\eta -u_0$ converges strongly to zero in $H^1(D)$ gives that  
$$ k_0^2 \mathcal{B}(u_\eta -u_0 , u_\eta -u_0)=k_0^2 \| \grad (u_\eta - u_0) \|^2_{L^2(D)}  \longrightarrow 0 \quad \text{as} \, \, \, \, \eta \rightarrow 0.$$
By using the coercivity of $\mathcal{A}_{\eta , k } (\cdot \, ,\cdot)$ we obtain that 
\begin{align*}
\hspace*{-0.2in} \alpha \| u_\eta -u_0 \|^2_{H^2(D)}  & \leq \mathcal{A}_{ \eta ,  k_\eta }(u_\eta -u_0, u_\eta -u_0) \\
						        &\hspace{-0.8in}= \big( \mathcal{A}_{ 0, k_0 }  -\mathcal{A}_{ 0, k_\eta }\big) (u_0, u_\eta -u_0) + \big( k^2_\eta -k_0^2\big) \mathcal{B}(u_\eta , u_\eta -u_0) + k_0^2 \mathcal{B}(u_\eta -u_0 , u_\eta -u_0)
\end{align*}
giving that $u_\eta -u_0$ converges strongly to zero in $H^2(D)$. 

\begin{theorem} \label{conv-te-2}
Let $(k_\eta , u_\eta) \in \R^+ \times H^2(D) \cap H^1_0(D)$ be an eigenpair for $\eta > 0$. If $k_\eta$ is bounded with respect to $\eta$ then as $\| \eta(x) \|_{L^{\infty}(\partial D)}\rightarrow 0$ there is a subsequence of $k_\eta$ and $u_\eta$ such that $k_\eta \rightarrow k_0$ and $u_\eta$ converges strongly to $u_0$ in $H^2(D)$ where $(k_0 , u_0) \in \R^+ \times H^2_0(D)$ is an eigenpair for $\eta=0$. 
\end{theorem}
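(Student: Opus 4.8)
The plan is to promote the weak $H^2(D)$ convergence already secured in Theorem \ref{conv-te-1} to strong convergence, and the driving tool is the coercivity of $\mathcal{A}_{\eta,k}(\cdot,\cdot)$ on $H^2(D)\cap H^1_0(D)$ established in \cite{te-cbc}, whose coercivity constant $\alpha$ is uniform in both $k$ and $\eta$. Accordingly, I would start from the inequality
$$\alpha\,\|u_\eta-u_0\|^2_{H^2(D)}\le \mathcal{A}_{\eta,k_\eta}(u_\eta-u_0,\,u_\eta-u_0)$$
and show the right-hand side tends to zero as $\|\eta\|_{L^\infty(\partial D)}\to 0$. The delicacy is that $u_\eta$ and $u_0$ solve genuinely different variational problems: $u_\eta$ satisfies \eqref{bilinearformprob} with the form $\mathcal{A}_{\eta,k_\eta}$, whereas its limit $u_0$ satisfies \eqref{bilinearformprob2} with the form $\mathcal{A}_{0,k_0}$ over the smaller space $H^2_0(D)$. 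So I cannot simply subtract the two equations; I first need a workable expression for $\mathcal{A}_{\eta,k_\eta}(u_\eta-u_0,\varphi)$.

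The key algebraic step exploits that $u_0\in H^2_0(D)$, which by Theorem \ref{conv-te-1} is where the limit lives, so $\partial_\nu u_0=0$ on $\partial D$ and the $\eta$-singular boundary integral in $\mathcal{A}_{\eta,k_\eta}$ annihilates $u_0$; hence $\mathcal{A}_{\eta,k_\eta}(u_0,\varphi)=\mathcal{A}_{0,k_\eta}(u_0,\varphi)$. Inserting both eigenvalue equations and separating the $k$-dependence then produces the decomposition
$$\mathcal{A}_{\eta,k_\eta}(u_\eta-u_0,\varphi)=\big(k_\eta^2-k_0^2\big)\mathcal{B}(u_\eta,\varphi)+k_0^2\,\mathcal{B}(u_\eta-u_0,\varphi)+\big(\mathcal{A}_{0,k_0}-\mathcal{A}_{0,k_\eta}\big)(u_0,\varphi).$$
With $\varphi=u_\eta-u_0$ I would estimate the three summands in turn. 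The first and third are controlled by $|k_\eta^2-k_0^2|$ and $|k_\eta^4-k_0^4|$, after expanding the form difference via its explicit $k^2$- and $k^4$-dependence and using the uniform $H^2$-bound on $u_\eta$ and $u_0$; both vanish by the eigenvalue convergence $k_\eta\to k_0$. The middle term equals $k_0^2\,\|\grad(u_\eta-u_0)\|^2_{L^2(D)}$, which vanishes because $u_\eta\to u_0$ strongly in $H^1(D)$ — again a conclusion of Theorem \ref{conv-te-1}. Feeding these three limits into the coercivity inequality gives $\|u_\eta-u_0\|_{H^2(D)}\to 0$.

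The hard part is the singular boundary term: as $\eta\to 0^+$ the coefficient $k^2/\eta$ in $\mathcal{A}_{\eta,k_\eta}$ diverges, so one cannot pass to the limit inside the form nor compare $u_\eta$ with $u_0$ directly. The decomposition is designed precisely to avoid this — the singular term never touches the limit $u_0$ (its normal derivative vanishes), while the contribution of $u_\eta$ is converted, through its own eigenvalue equation \eqref{bilinearformprob}, into the $\eta$-independent quantity $k_\eta^2\mathcal{B}(u_\eta,\cdot)$. The remaining point to verify is that the constants coming from the $k$-dependent form difference are genuinely uniform in $\eta$, which follows from the uniform $H^2(D)$-boundedness of the eigenfunctions already noted before Theorem \ref{conv-te-1}. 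No further estimates are needed.
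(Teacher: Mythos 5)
Your proposal is correct and follows essentially the same route as the paper: the identical three-term decomposition of $\mathcal{A}_{\eta,k_\eta}(u_\eta-u_0,\varphi)$ (exploiting $\partial_\nu u_0=0$ to kill the singular boundary integral and the two eigenvalue equations to eliminate the forms), the same term-by-term estimates via $|k_\eta^2-k_0^2|$, $|k_\eta^4-k_0^4|$ and the strong $H^1(D)$ convergence, and the same final appeal to the $\eta$- and $k$-uniform coercivity constant $\alpha$. No substantive difference from the paper's argument.
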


\subsubsection{Convergence rate for radially symmetric eigenfunctions}
We now look at the radially symmetric eigenfunctions and prove the linear convergence rate as predicted by our numerical examples (see Section \ref{numerics}). Notice that 
$$w_\eta (r)=  \text{J}_0(k_\eta) \text{J}_0(k_\eta \sqrt{n} r) \quad \text{ and } \quad v_\eta (r)=  \text{J}_0(k_\eta \sqrt{n}) \text{J}_0(k_\eta r)$$
are the eigenfunctions for the unit circle with $k_\eta \in \R$ the corresponding eigenvalues. We will prove the linear convergence rate for $w_\eta$ and the analysis is similar for the other eigenfunction. 
To this end, we assume that the eigenvalues have a linear convergence rate and we therefore have  
\begin{eqnarray*}
&&\hspace{-0.5in}| w_\eta (r) - w_0(r) |\\ 
&=&  \big| \text{J}_0(k_\eta) \text{J}_0(k_\eta \sqrt{n} r) -\text{J}_0(k_0) \text{J}_0(k_0 \sqrt{n} r) \big| \\
&\leq& \big| \text{J}_0(k_\eta) \text{J}_0(k_\eta \sqrt{n} r) -\text{J}_0(k_0) \text{J}_0(k_\eta \sqrt{n} r) \big| +\big| \text{J}_0(k_0) \text{J}_0(k_\eta \sqrt{n} r) -\text{J}_0(k_0) \text{J}_0(k_0 \sqrt{n} r) \big| \,,
\end{eqnarray*}
where $k$ is a transmission eigenvalue for $\eta=0$. 
Since $\text{J}_0$ is bounded by one and the Mean Value Theorem
\begin{align*}
| w_\eta (r) - w_0(r) | & \leq \big| \text{J}_0(k_\eta) -\text{J}_0(k_0)  \big| +\big|  \text{J}_0(k_\eta \sqrt{n} r) - \text{J}_0(k_0 \sqrt{n} r) \big| \\
&\leq  \big| \text{J}_1(\xi_1)\big| \big| k_\eta -k_0 \big| +\sqrt{n} \, r\,  \big| \text{J}_1(\xi_2)\big| \big| k_\eta -k_0 \big| \,,
\end{align*}
where $k_\eta < \xi_1 < k_0$ and $ k_\eta \sqrt{n} \, r< \xi_2 < k_0 \sqrt{n}\,  r$ with $0 \leq r \leq 1$. Similarly, since $\text{J}_1$ is bounded by one, we have that 
$$| w_\eta (r) - w_0(r) |  \leq C \big| k_\eta -k_0 \big| $$
where $C>0$ is a positive constant that is independent of $\eta$. Now, since we have that the eigenvalues have a linear convergence rate, we can conclude that the eigenfunctions also have a linear convergence rate. Also note that the case for spherical eigenfunctions in $\R^3$ works similarly.

We now consider the case of complex eigenvalues so we assume that $k_\eta \in \C$. Before we begin, we need a generalization of the Mean Value Theorem for holomorphic functions. To this end, assume that $f(z)$ is a holomorphic function and let $a$ and $b$ be distinct points. Now define the real valued functions  
$$ F(t) = \text{Re} \Big(\overline{(b-a)} f \big( a+t(b-a) \big) \Big) \quad \text{and} \quad G(t) = \text{Im} \Big(\overline{(b-a)} f \big( a+t(b-a) \big) \Big)$$
for $0 \leq t \leq 1$. Applying the Mean Value Theorem to the functions $F$ and $G$ implies that there exits $\alpha$, $\beta$ on the line segment connecting $a$ and $b$ such that 
$$|b-a|^2 \text{Re} \big(  f'(\alpha) \big) = \text{Re} \Big(  \overline{(b-a)} \big( f(b) -f(a) \big) \Big)$$
and 
$$|b-a|^2 \text{Im} \big(  f'(\beta) \big) = \text{Im} \Big(  \overline{(b-a)} \big( f(b) -f(a) \big) \Big).$$
Now recall that 
\begin{align*}
w_\eta (r) - w_0(r) &=   \text{J}_0(k_\eta) \text{J}_0(k_\eta \sqrt{n} r) -\text{J}_0(k_0) \text{J}_0(k_0 \sqrt{n} r)  \\
& \hspace{-0.4in}= \text{J}_0(k_\eta \sqrt{n} r) \Big( \text{J}_0(k_\eta) -\text{J}_0(k_0)  \Big) + \text{J}_0(k_0) \Big(  \text{J}_0(k_\eta \sqrt{n} r) - \text{J}_0(k_0 \sqrt{n} r) \Big). 
\end{align*}
Therefore, by the generalization of the Mean Value Theorem
\begin{align}
|k_\eta-k_0|^2 \text{Re} \big( \text{J}_1(\alpha) \big) = \text{Re} \Big(  \overline{(k_0-k_\eta )} \big( \text{J}_0(k_\eta) -\text{J}_0(k_0)\big) \Big) \label{complex-roc1}
\end{align}
and 
\begin{align}
|k_\eta-k_0|^2 \text{Im} \big(  \text{J}_1(\beta) \big) = \text{Im} \Big(  \overline{(k_0-k_\eta)} \big( \text{J}_0(k_\eta) -\text{J}_0(k_0) \big) \Big) \label{complex-roc2}
\end{align}
where $\alpha$, $\beta$ are on the line segment connecting $k_\eta$ and $k_0$. Since $\text{J}_0$ is analytic this implies that the right hand sides of equations \eqref{complex-roc1} and \eqref{complex-roc2} are of order $|k_\eta-k_0|^2 =\mathcal{O}(\eta^2)$ and therefore so is the left hand side which gives that $\big|  \text{J}_0(k_\eta) -\text{J}_0(k_0)  \big| = \mathcal{O}(\eta)$.  Similarly one can show that $\big|  \text{J}_0(k_\eta \sqrt{n} r) - \text{J}_0(k_0 \sqrt{n} r) \big| = \mathcal{O}(\eta)$. Therefore, we have that 
$$| w_\eta (r) - w_0(r) |   = \mathcal{O}(\eta)$$
for any $0 \leq r \leq 1$ proving the linear convergence that is seen in Section \ref{numerics}. This analysis also works in $\R^3$. 

The main assumption in this section is that the eigenvalues converge linearly as $\eta \rightarrow 0$. We now give some analytical evidence that the convergence rate is linear for the eigenvalues corresponding to the radially symmetric eigenfunctions. Recall, that for the unit sphere in $\R^3$ that the eigenvalues are the roots of 
$$d_\eta (k) = k\sin(k \sqrt{n} ) \cos( k ) - k\sqrt{n} \sin(k) \cos( k \sqrt{n}) + \eta \sin(k) \sin(k\sqrt{n}).$$
Therefore, we define that function 
$$ f(k)= k \cot (k) -k \sqrt{n} \cot (k \sqrt{n}) $$
where it is clear that $k_\eta$ is an eigenvalue if $f(k_\eta)=\eta$ and $k_0$ is an eigenvalue provided that $f(k_0)=0$. Now, assume that $k_0$ is contained in an interval where $f$ is a $C^1$ function and that $f'(k_0) \neq 0$, by the Inverse Function Theorem there is an open set containing $k_0$ where $f$ has a $C^1$ inverse. We denote the inverse of $f$ by $g$ which implies that $k_\eta = g(\eta)$. Therefore, appealing to the Mean Value Theorem we can conclude that $| k_\eta -k_0| = |g'(\xi)| \,  \eta$ for all $\eta$ sufficiently small where $k_\eta < \xi <k_0$. Since $g$ is $C^1$ in an interval we can conclude that $|k_\eta-k_0 | =\mathcal{O}(\eta)$ for all $\eta>0$ sufficiently small.

\subsection{ Existence and discreteness for an absorbing media }
In this section, we consider the case of an absorbing media. To this end, we assume that the material coefficients are given by 
$$n(x)= n_1(x) +\text{i} \frac{ n_2(x)}{k} \quad \text{and} \quad \eta(x)=\eta_1(x) +\text{i}\eta_2(x)$$
where $n_\ell (x)$ are real-valued positive functions in $L^{\infty}(D)$ for $\ell=1,2$. For the conductivity parameter $\eta$ we assume that the functions $\eta_\ell (x)$ are real-valued positive functions in $L^{\infty}(\partial D)$. The analysis of the transmission eigenvalue problem (i.e. $\eta=0$) has already been studied in \cite{TE-absorbing}. In this section, we consider the existence and discreteness of the conductive eigenvalues for an absorbing media. We now rewrite the variational formulation \eqref{TE-varform} of the eigenvalue problem. Manipulating the variational form in \eqref{TE-varform} we see that 
\begin{align*}      
&0=  \int\limits_D \frac{1}{n-1} \Delta u \,  \Delta \overline{\varphi}  \,  \text{d}x - \int\limits_D \frac{k^2}{n-1} (  \overline{\varphi} \, \Delta u + u \, \Delta \overline{\varphi}) \,  \text{d}x +  k^2 \int\limits_D \grad u \cdot \grad \overline{\varphi} \,  \text{d}x \\
&\qquad \qquad \qquad \qquad+k^4  \int\limits_D \frac{n}{ n-1 } u \, \overline{\varphi} \,  \text{d}x+  \int\limits_{\partial D} \frac{k^2}{\eta} \frac{\partial u}{\partial \nu} \frac{\partial \overline{\varphi} }{\partial \nu} \,  \text{d}s
\end{align*}
for all $\varphi \in H^2(D) \cap H^1_0(D)$. 
Substituting $n= n_1 +\text{i} \, { n_2}/{k}$ yields the following 
\begin{align*}      
&0=  \int\limits_D \frac{k}{k(n_1-1) +\text{i} n_2}\,  \Delta u \,  \Delta \overline{\varphi}  \, \text{d}x + k^2 \int\limits_{\partial D} \frac{1}{\eta} \frac{\partial u}{\partial \nu} \frac{\partial \overline{\varphi} }{\partial \nu} \, \text{d}s+  k^2 \int\limits_D \grad u \cdot \grad \overline{\varphi} \,  \text{d}x \\
&\qquad \qquad \qquad  + k^4  \int\limits_D \frac{k n_1 + \text{i} n_2 }{ k(n_1-1) +\text{i} n_2 }\,  u \, \overline{\varphi} \,  \text{d}x - k^2 \int\limits_D  \frac{k}{k(n_1-1) +\text{i} n_2}(  \overline{\varphi} \, \Delta u + u \, \Delta \overline{\varphi}) \,  \text{d}x.  
\end{align*}
Notice that we require that $| k(n_1-1) +\text{i} n_2 |$ is bounded below, so we assume that $n_1-1 \geq \gamma_1>0$ and $n_2\geq \gamma_2 >0$ and that there is a 
$$\delta >0 \quad \text{ such that} \quad \delta < \inf\limits_{x \in D} \left\{\frac{ n_2(x)}{n_1(x)-1} \right\}.$$
For all $k$ in the set   
$$ G_\delta= \big\{ z \in \C \quad \text{such that } \quad  \mathrm{Im}(z)>-\delta  \big\}$$
we have that $|k(n_1-1) +\text{i} n_2 |$ is bounded below.

The transmission eigenvalue problem can now be written in operator form as: find the values $k \in G_\delta \subset \C$ such that there is a nontrivial solution $u \in H^2(D) \cap H^1_0(D)$ satisfying
$$ (\mathbb{A}_k+ k \mathbb{B}_k ) u =0$$
where the bounded linear operators $\mathbb{A}_k$ and $\mathbb{B}_k: H^2(D) \cap H^1_0(D) \mapsto H^2(D) \cap H^1_0(D)$ are defined by the Riesz representation theorem such that 
\begin{align*}
&(\mathbb{A}_k u, \varphi )_{H^2(D)}= \int\limits_D \frac{1}{k(n_1-1) +\text{i} n_2} \Delta u \,  \Delta \overline{\varphi}  \,  \text{d}x,\\
&(\mathbb{B}_k u, \varphi )_{H^2(D)}=  \int\limits_{\partial D} \frac{1}{\eta} \frac{\partial u}{\partial \nu} \frac{\partial \overline{\varphi} }{\partial \nu} \, \text{d}s+   \int\limits_D \grad u \cdot \grad \overline{\varphi} \,  \text{d}x + k^2  \int\limits_D \frac{k n_1 + \text{i} n_2 }{ k(n_1-1) +\text{i} n_2 }\,  u \, \overline{\varphi} \,  \text{d}x \\
&\qquad \qquad \qquad   \qquad \qquad - k \int\limits_D  \frac{1}{k(n_1-1) +\text{i} n_2} \, (  \overline{\varphi} \, \Delta u + u \, \Delta \overline{\varphi}) \,  \text{d}x
\end{align*}
for all $\varphi \in H^2(D) \cap H^1_0(D)$. We see that the operator $\mathbb{B}_k$ is compact by appealing to Rellich's embedding 
theorem and the compact embedding of $H^{1/2}(\partial D)$ into $L^2(\partial D)$. 

To prove the discreteness we will use the analytic Fredholm Theorem. To this end, notice that the mappings $k \mapsto \mathbb{A}_k$ and $\mathbb{B}_k$ 
are analytic for all $k \in G_\delta$. Next, we show that $\mathbb{A}_k$ is coercive for all $k \in G_\delta$, therefore let $k=\text{Re}(k)+\text{i} \,\text{Im}(k)$ which gives that 
$$k(n_1-1) +\text{i} n_2 =\text{Re}(k)(n_1-1) +\text{i}\big[ n_2 +\text{Im}(k)(n_1-1)\big].$$
This implies that the sequilinear form for $\mathbb{A}_k$ is given by 
$$(\mathbb{A}_k u, \varphi )_{H^2(D)}= \int\limits_D \frac{\text{Re}(k)(n_1-1) -\text{i} \big[ n_2 +\text{Im}(k)(n_1-1)\big] }{ |\text{Re}(k)(n_1-1)|^2+| n_2 +\text{Im}(k)(n_1-1)|^2} \Delta u \,  \Delta \overline{\varphi}  \,  \text{d}x.$$
Now, we obtain 
\begin{align*}
-\mathrm{Im} (\mathbb{A}_k u, u )_{H^2(D)}=\int\limits_D \frac{\big[ n_2 +\text{Im}(k)(n_1-1)\big] }{ |\text{Re}(k)(n_1-1) |^2+|n_2 +\text{Im}(k)(n_1-1)|^2} | \Delta u|^2   \,  \text{d}x
\end{align*}
and since $k \in G_\delta$ we can conclude that $n_2 +\text{Im}(k)(n_1-1)$ is bounded below. Recall, that the $L^2(D)$-norm of the Laplacian is equivalent to the $H^2(D)$-norm in $H^2(D) \cap H^1_0(D)$, giving that $\mathbb{A}_k$ is coercive for all $k \in G_\delta$. 
We now have all we need to prove the discreteness of the transmission eigenvalues in the set $G_\delta \subset \C$. 

\begin{theorem}
Assume that $n_1-1 \geq \gamma_1>0$ and $n_2\geq \gamma_2 >0$ and that  
$$\delta >0 \quad \text{ is such that } \quad \delta < \inf\limits_{x \in D} \left\{\frac{ n_2(x)}{n_1(x)-1} \right\}.$$
Then the set of interior transmission eigenvalues is at most discrete in the upper half of the complex plane.
\end{theorem}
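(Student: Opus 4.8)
The plan is to apply the analytic Fredholm theorem to the operator pencil $S_k:=\mathbb{A}_k+k\mathbb{B}_k$ on the connected open set $G_\delta$. Since $\mathbb{A}_k$ has already been shown to be coercive for every $k\in G_\delta$, the Lax--Milgram theorem makes it boundedly invertible, so I would factor
\[
\mathbb{A}_k+k\mathbb{B}_k=\mathbb{A}_k\big(\mathbb{I}+k\mathbb{A}_k^{-1}\mathbb{B}_k\big)
\]
and set $\mathbb{T}_k:=k\mathbb{A}_k^{-1}\mathbb{B}_k$. Because $\mathbb{B}_k$ is compact and $\mathbb{A}_k^{-1}$ is bounded, $\mathbb{T}_k$ is compact; and since $k\mapsto\mathbb{A}_k,\mathbb{B}_k$ are analytic on $G_\delta$ and inversion preserves analyticity, $k\mapsto\mathbb{T}_k$ is an operator-valued analytic family of compact operators. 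A value $k\in G_\delta$ is a transmission eigenvalue precisely when $\mathbb{I}+\mathbb{T}_k$ fails to be injective.

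By the analytic Fredholm theorem there is then a dichotomy: either $\mathbb{I}+\mathbb{T}_k$ is non-invertible for every $k\in G_\delta$, or $(\mathbb{I}+\mathbb{T}_k)^{-1}$ exists for all $k\in G_\delta$ outside a set with no accumulation point in $G_\delta$. The whole proof therefore reduces to excluding the first alternative, i.e.\ to exhibiting a single $k\in G_\delta$ at which $S_k u=0$ forces $u=0$. I expect this to be the only genuine difficulty.

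To produce such a point I would test the variational form \eqref{TE-varform} with $\varphi=u$ and take $k=\text{i}\tau$, $\tau>0$ large (so that $\text{i}\tau\in G_\delta$). For this choice $n-1=n_1-1+n_2/\tau$ and $k^2n$ become real, and $u=0$ on $\partial D$ makes $\int_D\overline{u}\,\Delta u\,\text{d}x=-\int_D|\grad u|^2\,\text{d}x$ real. Writing $m=n_1-1+n_2/\tau>0$ and $p=\tau^2 n_1+\tau n_2>0$, the exact identity $p-\tau^2=\tau^2 m$ shows that the imaginary part of the interior integrand integrates to $\tau^2\,\text{Im}\int_D\overline{u}\,\Delta u\,\text{d}x=0$, so taking imaginary parts leaves only the boundary contribution
\[
\tau^2\int_{\partial D}\frac{\eta_2}{|\eta|^2}\,\Big|\frac{\partial u}{\partial\nu}\Big|^2\,\text{d}s=0 .
\]
Since $\eta_2>0$ this forces $\partial_\nu u=0$, hence $u\in H^2_0(D)$. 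Taking real parts next, and integrating the cross term by parts (now with no boundary contribution because both $u$ and $\partial_\nu u$ vanish), I obtain
\[
0=\int_D\frac{1}{m}|\Delta u|^2\,\text{d}x+\int_D\frac{\tau^2+p}{m}|\grad u|^2\,\text{d}x+\int_D\frac{p\tau^2}{m}|u|^2\,\text{d}x+\text{Re}\int_D\overline{u}\,\grad\Big(\tfrac{\tau^2+p}{m}\Big)\cdot\grad u\,\text{d}x .
\]
The first three integrals are nonnegative with weights of order $1$, $\tau^2$ and $\tau^4$ for large $\tau$, whereas the last term is only $O(\tau^2)\,\|u\|_{L^2}\|\grad u\|_{L^2}$; a Young's-inequality absorption then dominates it by the $\tau^2\|\grad u\|^2$ and $\tau^4\|u\|^2$ terms once $\tau$ is large, forcing $u=0$. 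This supplies the required non-eigenvalue point and resolves the dichotomy in favour of discreteness.

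The main obstacle, as indicated, is this last energy estimate. The interior integrand is genuinely indefinite: the cross terms $\overline{u}\,\Delta u$ and $u\,\Delta\overline{u}$ do not combine into a real square unless $n\equiv 1$, so one must use the precise cancellation $p-\tau^2=\tau^2 m$ to annihilate the imaginary part and then carefully absorb the coefficient-gradient term $\grad\big((\tau^2+p)/m\big)$, which is itself of order $\tau^2$. This is exactly where the lower bounds $n_1-1\ge\gamma_1$, $n_2\ge\gamma_2$, enough regularity of $n_1$ to control $\grad(1/m)$, and the largeness of $\tau$ are all needed; the analyticity, compactness and coercivity statements established earlier in the excerpt handle everything else.
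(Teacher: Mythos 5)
Your overall architecture is exactly the paper's: factor the pencil as $\mathbb{A}_k\bigl(\mathbb{I}+k\mathbb{A}_k^{-1}\mathbb{B}_k\bigr)$, use coercivity of $\mathbb{A}_k$ on $G_\delta$ to invert it analytically, note that $k\mapsto k\mathbb{A}_k^{-1}\mathbb{B}_k$ is an analytic family of compact operators, and invoke the analytic Fredholm alternative. The divergence is entirely in how the dichotomy is resolved, and here you have made the step much harder than it needs to be. The compact perturbation carries an explicit factor of $k$, and $k=0$ lies in $G_\delta$ (where $\mathbb{A}_0$ is still coercive because its kernel reduces to $1/(\mathrm{i}n_2)$ with $n_2\geq\gamma_2>0$); hence at $k=0$ the operator $\mathbb{I}+k\mathbb{A}_k^{-1}\mathbb{B}_k$ \emph{is} the identity and injectivity is immediate. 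That is the paper's entire argument for this step.

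Your substitute --- testing the variational form at $k=\mathrm{i}\tau$ and running an energy estimate --- is a genuinely different route, and the cancellation $p-\tau^2=\tau^2 m$ you exploit is correct, but as written it does not prove the theorem under its stated hypotheses. The absorption of the cross term forces you to integrate by parts against the variable coefficient $(\tau^2+p)/m$, which requires $\nabla n_1$ and $\nabla n_2$ to exist and be controlled; the theorem assumes only $n_1,n_2\in L^\infty(D)$. You flag this yourself, but flagging an unavailable hypothesis does not remove the gap. (Your imaginary-part step also quietly uses $\eta_2>0$ pointwise; that is part of the section's standing assumptions, so it is fine, but worth stating.) The fix is simply to discard the large-$\tau$ argument and observe that the factor $k$ in front of the compact part hands you the required non-eigenvalue point for free.
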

\begin{proof}
We have that for all $k \in G_\delta$ that $\mathbb{A}^{-1}_k$ exists as a bounded linear operator from $H^2(D) \cap H^1_0(D)$ to itself. Since $\mathbb{A}_k$ depends analytically on $k$ in $G_\delta$ we can conclude that $\mathbb{A}^{-1}_k$ is analytic. This gives that $k$ is an interior conductive eigenvalue if and only if 
$\mathbb{I}+k \mathbb{A}^{-1}_k \mathbb{B}_k$ has a nontrivial kernel. Since the mapping $k \mapsto \mathbb{A}^{-1}_k \mathbb{B}_k$ is analytic and compact we only need to prove that $\mathbb{I}+k \mathbb{A}^{-1}_k \mathbb{B}_k$ is injective at some point in $G_\delta$ by the analytic Fredholm Theorem. To this end, notice that  for $k=0$ then the operator $\mathbb{I}+k \mathbb{A}^{-1}_k \mathbb{B}_k $ is the identity and is therefore injective, proving the claim since the upper half of the complex plane is a subset of $G_\delta$.  
\end{proof}

The last result gives the discreteness of the conductive eigenvalues and now we turn our attention to existence. The idea is to consider the case of an absorbing material as a perturbation of a non-absorbing material and consider what happens in the limit as the absorption tends to zero. To start with, we will use the results in \cite{sharinonlinear} where the perturbation of a non-linear eigenvalue problem is studied. We now consider the case where Re$(k)>0$ and  
\begin{align*}
\mathrm{Re} (\mathbb{A}_k u, u )_{H^2(D)} = \int\limits_D  \frac{\text{Re}(k)(n_1-1)}{ |\text{Re}(k)(n_1-1)|^2+| n_2 +\text{Im}(k)(n_1-1)|^2} | \Delta u|^2   \,  \text{d}x,
\end{align*}
and notice that if $n_1 - 1$ is uniformly positive (or negative) then the operators $\mathbb{A}_k$ is coercive for all $k$ such that Re$(k)>0$. We can now write our eigenvalue problem as 
$$k  \mathbb{T}(k) u = u \quad \text{ where } \quad \mathbb{T}(k)= - \mathbb{A}^{-1}_k \mathbb{B}_k.$$
It is clear that the operator $\mathbb{T}(k)$ depends on the absorption parameters $n_2$ and $\eta_2$.  We first consider the case for where $\eta$ is real-valued. To this end, let $\mathbb{T}_{n_2}(k)=  \mathbb{A}^{-1}_{n_2 , k} \mathbb{B}_{n_2 , k}$ be the operator associated with our non-linear eigenvalue problem. Theorem 3.1 in \cite{sharinonlinear} gives the existence of conductive eigenvalues for $n_2$ sufficiently small provided that $\mathbb{T}_{n_2}(k)$ converges in the operator norm to $\mathbb{T}_{0}(k)$ as $n_2 \to 0^+$ in $L^{\infty}(D)$ for all $k$ such that Re$(k)>0$. 

A follow result gives that convergence of $\mathbb{B}_{n_2 , k}$ which is a simple consequence of the Cauchy-Schwartz inequality and the $L^\infty$ convergence of the function $n_2$ to the zero function. 
\begin{theorem}
If $n_1 - 1 $ is uniformly positive (or negative) then the operator $\mathbb{B}_{n_2 , k}$ converges in norm to $\mathbb{B}_{0 , k}$ as $n_2 \to 0$ in $L^{\infty}(D)$ for all $k$ such that Re$(k)>0$. 
\end{theorem}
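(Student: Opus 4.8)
The plan is to bound the operator norm $\| \mathbb{B}_{n_2 , k} - \mathbb{B}_{0,k} \|$ directly from the sesquilinear form that defines these operators through the Riesz representation theorem. Fix $k$ with $\text{Re}(k)>0$. For $u, \varphi \in H^2(D) \cap H^1_0(D)$ the first observation is that only two of the four terms in $(\mathbb{B}_k u , \varphi)_{H^2(D)}$ depend on $n_2$: the boundary term $\int_{\partial D} \eta^{-1} \partial_\nu u \, \partial_\nu \overline{\varphi} \, \text{d}s$ and the gradient term $\int_D \grad u \cdot \grad \overline{\varphi} \, \text{d}x$ are independent of the absorption and therefore cancel in the difference. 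Hence I only need to estimate
$$ \big( (\mathbb{B}_{n_2,k} - \mathbb{B}_{0,k}) u , \varphi \big)_{H^2(D)} = k^2 \int_D c_1(x) \, u \overline{\varphi} \, \text{d}x - k \int_D c_2(x) \big( \overline{\varphi} \, \Delta u + u \, \Delta \overline{\varphi} \big) \, \text{d}x $$
where $c_1$ and $c_2$ are the differences of the coefficients $\frac{kn_1 + \text{i} n_2}{k(n_1-1)+\text{i} n_2}$ and $\frac{1}{k(n_1-1)+\text{i}n_2}$ from their values at $n_2 = 0$.

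Next I would simplify the coefficients. Writing $a = k(n_1 - 1)$ and using $a - kn_1 = -k$, a direct computation gives
$$ c_1 = \frac{kn_1 + \text{i} n_2}{a + \text{i} n_2} - \frac{kn_1}{a} = \frac{- \text{i} k n_2}{a( a + \text{i}n_2)} \quad \text{ and } \quad c_2 = \frac{1}{a + \text{i} n_2} - \frac{1}{a} = \frac{- \text{i} n_2}{a(a+\text{i}n_2)}. $$
The crucial point is that both denominators are bounded below uniformly in $x$: since $n_1 - 1 \geq \gamma_1 > 0$ we have $|a| \geq |k| \gamma_1$, and since $\text{Re}(a + \text{i} n_2) = \text{Re}(k)(n_1-1)$ we have $|a + \text{i}n_2| \geq \text{Re}(k) \gamma_1 > 0$. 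Consequently $\| c_1 \|_{L^\infty(D)}$ and $\| c_2 \|_{L^\infty(D)}$ are both of order $\| n_2 \|_{L^\infty(D)}$, with an implied constant depending only on $k$ and $\gamma_1$.

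Finally, I would apply the Cauchy--Schwarz inequality to the two integrals. Using that on $H^2(D) \cap H^1_0(D)$ the $L^2(D)$ norm of the Laplacian is equivalent to the $H^2(D)$ norm (so that $\| \Delta u \|_{L^2(D)} \leq C \| u \|_{H^2(D)}$), each integral is controlled by a product of $L^2$ norms times $\| c_i \|_{L^\infty(D)}$, giving
$$ \Big| \big( (\mathbb{B}_{n_2,k} - \mathbb{B}_{0,k}) u , \varphi \big)_{H^2(D)} \Big| \leq C(k, \gamma_1) \, \| n_2 \|_{L^\infty(D)} \, \| u \|_{H^2(D)} \| \varphi \|_{H^2(D)}. $$
Taking the supremum over the unit ball in each argument yields $\| \mathbb{B}_{n_2,k} - \mathbb{B}_{0,k} \| \leq C(k,\gamma_1) \| n_2 \|_{L^\infty(D)} \to 0$ as $n_2 \to 0$ in $L^\infty(D)$. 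I do not expect any genuine obstacle here: once the two $n_2$-dependent coefficients are isolated and the uniform lower bound on the denominators is observed, the result is essentially bookkeeping via Cauchy--Schwarz. The only point that requires care is that the convergence is pointwise in $k$ rather than uniform, since the constant $C(k,\gamma_1)$ degenerates as $\text{Re}(k) \to 0$; accordingly one should fix $k$ with $\text{Re}(k)>0$ throughout and not attempt a bound uniform in $k$.
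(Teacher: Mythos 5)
Your proof is correct and follows exactly the route the paper intends: the paper states this theorem without a written proof, remarking only that it is ``a simple consequence of the Cauchy--Schwartz inequality and the $L^{\infty}$ convergence of the function $n_2$ to the zero function,'' and your argument supplies precisely that bookkeeping --- isolating the two $n_2$-dependent coefficients, computing $c_1$ and $c_2$ explicitly, bounding the denominators below via $|a|\geq |k|\gamma_1$ and $|a+\mathrm{i}n_2|\geq \mathrm{Re}(k)\gamma_1$, and closing with Cauchy--Schwarz. The only cosmetic point is that you write the lower bounds for the case $n_1-1\geq\gamma_1>0$; the uniformly negative case is identical with $|n_1-1|\geq\gamma_1$ in place of $n_1-1$.
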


Next, we consider the convergence of the operator $\mathbb{A}^{-1}_{n_2 , k}$  as $n_2 \to 0$ in $L^{\infty}(D)$. To prove the convergence we first notice that $\big\| \mathbb{A}^{-1}_{n_2 , k} \big\|$ is bounded as $n_2$ tends to zero for Re$(k)>0$. Indeed, assume that $n_2 <1$ then we have that 
$$\sigma(n_1,k)  \| \Delta u\|^2_{L^2(D)} \, \leq \,   \mathrm{Re} (\mathbb{A}_{n_2 , k} u, u )_{H^2(D)} $$
where the coercivity constant 
$$\sigma(n_1,k) = \inf\limits_{x \in D}  \frac{\text{Re}(k)(n_1-1)}{ |\text{Re}(k)(n_1-1)|^2+| 1 +\text{Im}(k)(n_1-1)|^2} $$
is independent of $n_2$.
 Therefore, we have that for $n_2 < 1$ that $\big\| \mathbb{A}^{-1}_{n_2 , k} \big\| $ is uniformly bounded.  

\begin{theorem}
If $n_1 - 1 $ is uniformly positive (or negative) then the operator $\mathbb{A}^{-1}_{n_2 , k} $ converges in norm to $\mathbb{A}^{-1}_{0 , k}$ as $n_2 \to 0$ in $L^{\infty}(D)$ for all $k$ such that Re$(k)>0$. 
\end{theorem}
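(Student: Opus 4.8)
The plan is to deduce the norm convergence of the inverses from the norm convergence of the operators themselves, via the standard second-resolvent identity. Since both $\mathbb{A}_{n_2,k}$ and $\mathbb{A}_{0,k}$ are invertible (each is coercive for $\mathrm{Re}(k)>0$), one has the algebraic identity
$$
\mathbb{A}^{-1}_{n_2 , k} - \mathbb{A}^{-1}_{0 , k} = \mathbb{A}^{-1}_{n_2 , k}\big( \mathbb{A}_{0 , k} - \mathbb{A}_{n_2 , k} \big) \mathbb{A}^{-1}_{0 , k},
$$
so that, taking operator norms,
$$
\big\| \mathbb{A}^{-1}_{n_2 , k} - \mathbb{A}^{-1}_{0 , k} \big\| \leq \big\| \mathbb{A}^{-1}_{n_2 , k} \big\| \, \big\| \mathbb{A}_{0 , k} - \mathbb{A}_{n_2 , k} \big\| \, \big\| \mathbb{A}^{-1}_{0 , k} \big\|.
$$
The factor $\| \mathbb{A}^{-1}_{0 , k} \|$ is a fixed finite constant, and $\| \mathbb{A}^{-1}_{n_2 , k} \|$ is uniformly bounded in $n_2<1$ by the coercivity estimate with constant $\sigma(n_1,k)$ established just above the statement. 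Hence the whole problem reduces to showing $\| \mathbb{A}_{0 , k} - \mathbb{A}_{n_2 , k} \| \to 0$ as $\| n_2 \|_{L^\infty(D)} \to 0$.

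For that reduction I would compute the defining sesquilinear form of the difference directly. By the Riesz definition of $\mathbb{A}_{n_2,k}$ and $\mathbb{A}_{0,k}$,
$$
\big( (\mathbb{A}_{n_2 , k} - \mathbb{A}_{0 , k}) u, \varphi \big)_{H^2(D)} = \int\limits_D \left( \frac{1}{k(n_1-1) + \text{i} n_2} - \frac{1}{k(n_1-1)} \right) \Delta u \, \Delta \overline{\varphi} \, \text{d}x,
$$
and the coefficient in parentheses equals $-\text{i} n_2 \big/ \big[ \big(k(n_1-1)+\text{i} n_2\big) k(n_1-1) \big]$, whose modulus is at most $n_2 \big/ \big( | k(n_1-1)+\text{i} n_2 | \, | k(n_1-1) | \big)$. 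Here the uniform positivity of $n_1-1$ (bounded below by $\gamma_1>0$) together with $\mathrm{Re}(k)>0$ forces both $|k(n_1-1)|$ and $|k(n_1-1)+\text{i} n_2|$ to be bounded below uniformly in $n_2$, so the coefficient is bounded pointwise by $C\,\| n_2 \|_{L^\infty(D)}$ with $C$ independent of $n_2$.

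With this pointwise bound in hand the Cauchy--Schwarz inequality gives
$$
\big| \big( (\mathbb{A}_{n_2 , k} - \mathbb{A}_{0 , k}) u, \varphi \big)_{H^2(D)} \big| \leq C\,\| n_2 \|_{L^\infty(D)} \, \| \Delta u \|_{L^2(D)} \, \| \Delta \varphi \|_{L^2(D)},
$$
and invoking the equivalence of $\| \Delta \cdot \|_{L^2(D)}$ with the $H^2(D)$ norm on $H^2(D)\cap H^1_0(D)$ (already used to prove coercivity) yields $\big| \big( (\mathbb{A}_{n_2 , k} - \mathbb{A}_{0 , k}) u, \varphi \big)_{H^2(D)} \big| \leq C'\,\| n_2 \|_{L^\infty(D)} \, \| u \|_{H^2(D)} \, \| \varphi \|_{H^2(D)}$. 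Choosing $\varphi = (\mathbb{A}_{n_2 , k} - \mathbb{A}_{0 , k}) u$ then gives $\| \mathbb{A}_{n_2 , k} - \mathbb{A}_{0 , k} \| \leq C'\,\| n_2 \|_{L^\infty(D)}$, which tends to zero and, combined with the resolvent identity above, proves the claim. I do not anticipate a genuine obstacle: the only subtle point is verifying the uniform lower bounds on the denominators, and these are exactly the ingredients already secured in establishing the uniform coercivity of $\mathbb{A}_{n_2,k}$; everything else is a routine application of Cauchy--Schwarz and the norm equivalence.
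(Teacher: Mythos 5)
Your proposal is correct and is essentially the paper's own argument: the identity $\mathbb{A}_{n_2,k}(w_{n_2}-w)=(\mathbb{A}_{0,k}-\mathbb{A}_{n_2,k})w$ used in the paper is exactly your resolvent identity applied to $f$, with the uniform coercivity of $\mathbb{A}_{n_2,k}$ playing the role of your uniform bound on $\|\mathbb{A}^{-1}_{n_2,k}\|$ and the boundedness of $\mathbb{A}^{-1}_{0,k}$ supplying the remaining factor. Your explicit pointwise estimate of the coefficient difference as $O(\|n_2\|_{L^\infty(D)})$ is a slightly more detailed rendering of the step the paper leaves implicit.
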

\begin{proof}
We begin by letting $w_{n_2} = \mathbb{A}^{-1}_{n_2 , k} f$ and $w= \mathbb{A}^{-1}_{0 , k} f$ for some $f \in H^2(D) \cap H^1_0(D)$. We wish to show that $w_{n_2}$ converges to $w$ in the $H^2(D)$ as $n_2 \to 0$ in $L^{\infty}(D)$. To do so, notice that 
\begin{align*}
\Big( \mathbb{A}_{n_2 , k}  \left(  w_{n_2} - w \right)  , \varphi \Big)_{H^2(D)} & = \Big(  \left(  \mathbb{A}_{0 , k} - \mathbb{A}_{n_2 , k}\right)w  , \varphi \Big)_{H^2(D)}\\
														     &= \int\limits_{D} \left[ \frac{1}{k(n_1 -1 )}  - \frac{1}{k(n_1 -1) +\text{i} n_2 }  \right]  \Delta w \Delta \overline{\varphi} \, dx.
\end{align*}
Appealing to the fact that $\mathbb{A}_{n_2 , k}$ is coercive gives that 
$$\sigma \| w_{n_2} - w  \|_{H^2(D)} \leq \left\| \frac{1}{k(n_1 -1 )+\text{i} n_2}  - \frac{1}{k(n_1 -1)}  \right\|_{L^{\infty}(D)} \| \Delta w \|_{L^2(D)}$$
for $\varphi = w_{n_2} - w$. Now, since $\mathbb{A}^{-1}_{0 , k}$ is a bounded operator we have that 
$$ \Big\| \big(\mathbb{A}^{-1}_{n_2 , k}   - \mathbb{A}^{-1}_{0 , k}  \big) f  \Big\|_{H^2(D)} \leq  C \left\| \frac{1}{k(n_1 -1 )+\text{i} n_2}  - \frac{1}{k(n_1 -1)}  \right\|_{L^{\infty}(D)} \| f \|_{H^2(D)}$$
proving the claim by taking the supremum over $f \in H^2(D)$ with unit norm. 
\end{proof}

This gives that the operator $\mathbb{T}_{n_2}(k)$ converges in norm to $\mathbb{T}_{0}(k)$ as $n_2 \to 0$ in $L^{\infty}(D)$ for all $k$ such that Re$(k)>0$. Where the operator $\mathbb{T}_{0}(k)$ corresponds to the conductive eigenvalue problem with a real-valued refractive index. In \cite{te-cbc} it is proven that there exists infinity many conductive eigenvalues for real-valued refractive index and boundary conductivity. Now appealing to Theorem 3.1 in \cite{sharinonlinear} we have that for any $n_2$ sufficiently small there is a eigenvalue corresponding to an absorbing material in a neighborhood of an eigenvalue of a non-absorbing media which gives the following result. 

\begin{theorem}\label{eig-exist1}
Assume that $\eta_2 = 0$ and that $n_1 - 1 $ is uniformly positive (or negative). Then there exists infinity many conductive eigenvalues $k$ such that Re$(k)>0$ provided that $n_2$ is sufficiently small. 
\end{theorem}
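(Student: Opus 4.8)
The plan is to obtain the absorbing-media eigenvalues by perturbing off the already-known eigenvalues of the non-absorbing problem, using the operator-norm convergence assembled in the two preceding theorems. Writing the problem in the fixed-point form $k\,\mathbb{T}_{n_2}(k)u = u$ with $\mathbb{T}_{n_2}(k) = -\mathbb{A}^{-1}_{n_2,k}\mathbb{B}_{n_2,k}$, I would treat $n_2$ as a small perturbation parameter and transfer the spectral data of the limit operator $\mathbb{T}_0(k)$ to $\mathbb{T}_{n_2}(k)$.

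First I would combine the two convergence results: since $\mathbb{A}^{-1}_{n_2,k}\to\mathbb{A}^{-1}_{0,k}$ and $\mathbb{B}_{n_2,k}\to\mathbb{B}_{0,k}$ in norm as $n_2\to 0^+$, and since $\big\|\mathbb{A}^{-1}_{n_2,k}\big\|$ is uniformly bounded for $n_2<1$ via the coercivity constant $\sigma(n_1,k)$, the product $\mathbb{T}_{n_2}(k)$ converges in operator norm to $\mathbb{T}_0(k)$ on the region $\text{Re}(k)>0$. Setting $\eta_2=0$ renders the conductivity real-valued, so $\mathbb{T}_0(k)$ is precisely the operator of the conductive eigenvalue problem with real refractive index and real boundary conductivity studied in \cite{te-cbc}; by that reference this limit problem possesses an infinite sequence of eigenvalues $\{k^{(j)}\}_{j\in\N}$ with $\text{Re}(k^{(j)})>0$, each of which is isolated by the discreteness theorem established above.

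Finally I would invoke Theorem 3.1 of \cite{sharinonlinear} for the nonlinear eigenvalue problem. Because $\mathbb{T}_{n_2}(k)$ is analytic in $k$ on $G_\delta$, is compact (as $\mathbb{A}^{-1}_{n_2,k}$ is bounded and $\mathbb{B}_{n_2,k}$ is compact), and converges in norm to $\mathbb{T}_0(k)$, each isolated eigenvalue $k^{(j)}$ of the limiting problem persists: for $n_2$ sufficiently small there is an eigenvalue $k^{(j)}_{n_2}$ of the perturbed problem lying in an arbitrarily small neighborhood of $k^{(j)}$. Since the $k^{(j)}$ are distinct and isolated, one can shrink $n_2$ so that the relevant neighborhoods are pairwise disjoint, whence the $k^{(j)}_{n_2}$ are distinct and infinitely many conductive eigenvalues with $\text{Re}(k)>0$ survive.

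The hard part will be the correct application of the nonlinear perturbation theorem rather than a naive continuity-of-eigenvalues argument: here both the operator and the eigenparameter $k$ enter $\mathbb{T}_{n_2}(k)$, so I must verify the hypotheses of \cite{sharinonlinear}---analyticity in $k$, the nondegeneracy and isolation of each limiting eigenvalue $k^{(j)}$, and uniformity of the norm convergence on a fixed neighborhood of each $k^{(j)}$---and must confirm that infinitude is genuinely preserved under perturbation, i.e.\ that for a fixed small $n_2$ the surviving perturbed eigenvalues remain infinite in number and do not all coalesce.
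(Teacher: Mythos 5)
Your proposal follows essentially the same route as the paper: combine the norm convergence of $\mathbb{A}^{-1}_{n_2,k}$ and $\mathbb{B}_{n_2,k}$ to get $\mathbb{T}_{n_2}(k)\to\mathbb{T}_{0}(k)$ in operator norm, identify the limit with the real-coefficient conductive problem whose infinitely many eigenvalues are supplied by \cite{te-cbc}, and invoke Theorem 3.1 of \cite{sharinonlinear} to perturb each isolated eigenvalue. The uniformity issue you flag at the end (whether a single fixed small $n_2$ yields infinitely many surviving eigenvalues, or only finitely many per fixed $n_2$) is a genuine subtlety, but the paper's own proof leaves it at the same level of detail as you do.
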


For the case where $\eta_2 \neq 0$ one can easily show that the operator $\mathbb{B}_{\eta_2 , k}$ converges in norm to the corresponding operator as $\eta_2 \to 0$ in $L^{\infty}(\partial D)$. Therefore, we can conclude that the operator $\mathbb{T}_{\eta_2}(k)=  \mathbb{A}^{-1}_{ k} \mathbb{B}_{\eta_2 , k}$ also converges in norm as $\eta_2 \to 0^+$ in $L^{\infty}(\partial D)$. Now by combining Theorem \ref{eig-exist1} and Theorem 3.1 in \cite{sharinonlinear} gives the following result. 

\begin{theorem}\label{eig-exist2}
Assume that $n_1 - 1 $ is uniformly positive (or negative). Then there exists infinity many transmission eigenvalues $k$ such that Re$(k)>0$ provided that $n_2$ and $\eta_2$ are sufficiently small. 
\end{theorem}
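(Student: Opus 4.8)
The plan is to treat the case $\eta_2 \neq 0$ as a second perturbation, exactly parallel to the perturbation in $n_2$ already used for Theorem~\ref{eig-exist1}. First I would fix $n_2$ small enough that Theorem~\ref{eig-exist1} applies, so that the problem with purely real conductivity $\eta = \eta_1$ (i.e.\ $\eta_2 = 0$) already possesses infinitely many conductive eigenvalues $k$ with $\mathrm{Re}(k) > 0$; these play the role of the unperturbed eigenvalues. With $n_2$ now frozen, the only remaining dependence on $\eta_2$ enters through the boundary term of $\mathbb{B}_{\eta_2,k}$, since $\mathbb{A}_k$ does not involve $\eta$ at all. Hence $\mathbb{A}_k^{-1}$ is independent of $\eta_2$ and, by the coercivity of $\mathbb{A}_k$ established for $\mathrm{Re}(k)>0$, is a bounded operator.

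The heart of the argument is the norm convergence $\mathbb{T}_{\eta_2}(k) \to \mathbb{T}_0(k)$ as $\eta_2 \to 0^+$, where $\mathbb{T}_0(k)$ is the $\eta_2=0$ operator supplied by Theorem~\ref{eig-exist1}. Because $\mathbb{A}_k^{-1}$ is fixed and bounded, it suffices to show $\mathbb{B}_{\eta_2,k} \to \mathbb{B}_{0,k}$ in operator norm. The difference is carried entirely by the boundary integral $\int_{\partial D}\frac{1}{\eta}\,\partial_\nu u\,\partial_\nu\overline{\varphi}\,\mathrm{d}s$, and since the $\eta_\ell$ are positive with $\eta_1$ bounded below, one estimates
\[ \left\| \frac{1}{\eta_1 + \text{i}\eta_2} - \frac{1}{\eta_1} \right\|_{L^\infty(\partial D)} = \left\| \frac{\eta_2}{\eta_1(\eta_1 + \text{i}\eta_2)} \right\|_{L^\infty(\partial D)} \longrightarrow 0 \quad \text{as } \eta_2 \to 0^+. \]
Combining this with the Cauchy--Schwarz inequality and the boundedness of the trace map $u \mapsto \partial_\nu u$ from $H^2(D)$ into $H^{1/2}(\partial D) \hookrightarrow L^2(\partial D)$ yields the required norm convergence of $\mathbb{B}_{\eta_2,k}$, hence of $\mathbb{T}_{\eta_2}(k)$, uniformly on compact subsets of $\{\mathrm{Re}(k)>0\}$. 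This is exactly the computation the text attributes to the Cauchy--Schwarz inequality.

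With norm convergence of the analytic, compact operator family $\mathbb{T}_{\eta_2}(k)$ in hand (analyticity and compactness being already available from the discreteness argument), I would invoke Theorem~3.1 of \cite{sharinonlinear} a second time: each eigenvalue $k^\star$ of the limiting $\eta_2=0$ problem persists as an eigenvalue $k_{\eta_2}$ of $\mathbb{T}_{\eta_2}(k)$ lying in a shrinking neighbourhood of $k^\star$ once $\eta_2$ is small. Since Theorem~\ref{eig-exist1} furnishes infinitely many such $k^\star$ with $\mathrm{Re}(k^\star)>0$, this produces infinitely many eigenvalues for the fully absorbing problem, proving the claim.

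The step I expect to be the genuine obstacle is the \emph{infinitely many} part: the abstract perturbation result attaches to each reference eigenvalue a perturbed eigenvalue only for $\eta_2$ below a threshold that could in principle depend on that eigenvalue. To conclude that a single sufficiently small pair $(n_2,\eta_2)$ captures infinitely many eigenvalues simultaneously, one must either appeal to a uniform version of Theorem~3.1 of \cite{sharinonlinear} or verify that the admissible thresholds do not collapse to zero along the reference sequence. This is precisely the delicacy already navigated in Theorem~\ref{eig-exist1}, so rather than reproving it I would lean on the structure of the cited perturbation theorem to absorb both perturbations at once.
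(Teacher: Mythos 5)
Your proposal follows essentially the same route as the paper: the paper likewise observes that $\mathbb{A}_k^{-1}$ is independent of $\eta_2$, asserts norm convergence $\mathbb{B}_{\eta_2,k}\to\mathbb{B}_{0,k}$ as $\eta_2\to 0^+$ (hence of $\mathbb{T}_{\eta_2}(k)$), and then combines Theorem~\ref{eig-exist1} with Theorem~3.1 of \cite{sharinonlinear}. Your explicit $L^\infty$ estimate on $1/(\eta_1+\mathrm{i}\eta_2)-1/\eta_1$ and the remark about the eigenvalue-dependent smallness threshold simply spell out details the paper leaves implicit.
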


\section{Numerical experiments}\label{numerics}
\subsection{Convergence of the eigenvalues and eigenfunctions}
As we have proven in Section \ref{convergence}, the interior transmission eigenvalues (ITEs) $k_\eta$ converge 
to the ITEs $k_0$ as $\eta$ approach $0$. 
This is true for both real and complex-valued ITEs. 
For the index of refraction $n=3$ and the sequence $\eta=1/2^i$ with $i=0,1,\ldots,8$, we obtain two real and three complex-valued ITEs 
in the domain $\Omega=[0,10]\times [0,10]\mathrm{i}\subset \mathbb{C}$ using a unit sphere. 
We also assume that the eigenfunctions are radially-symmetric, therefore $w(r)$ and $v(r)$ with  $r=|x|$. This implies that
\begin{align*}
r^2 w'' +2 rw'+k^2 r^2 nw=0 \quad  \text{and}  \quad r^2 v'' +2 rv'+k^2 r^2 v=0 \quad \textrm{ in } \,  D. 
\end{align*} 
It is easy to see that $w(r)=c_1 j_0(k \sqrt{n} r)$ and $v(r)=c_2 j_0(k r)$ where $j_0$ is the spherical Bessel function of the first kind of order zero with $c_1$ and $c_2$  constants.  Applying the boundary conditions we have that $k$ is a 
ITE if and only if \\
$$\text{det}\left(
\begin{array}{cc}
 j_0(k \sqrt{n} ) & \, \,  -j_0(k ) \\
 k\sqrt{n}  j'_0(k \sqrt{n} )  & \, \,  -k j'_0(k )- \eta j_0(k )
\end{array}
\right)=0. $$ 
Now recall that 
$$ j_0( t )=\frac{\sin t}{t} \quad \text{ and } \quad j'_0( t )= \frac{-\sin t}{t^2} +\frac{\cos t}{t} $$
which gives that $k$ is an ITE provided that $k$ is a root of 
$$ d_\eta (k) = k\sin(k \sqrt{n} ) \cos( k ) - k\sqrt{n} \sin(k) \cos( k \sqrt{n}) + \eta \sin(k) \sin(k\sqrt{n}).$$
It has been proven in \cite{coltonyj} that the function $d_0(k)$ has infinitely many complex roots provided that $\sqrt{n}$ is not an integer or a reciprocal of an integer. By appealing to Hurwitz's theorem (see p. 152 of \cite{conway}) that for $\eta$ sufficiently small there exists 
infinitely many complex roots of $d_\eta(k)$. 

\begin{theorem}\label{complex-te-1}
Assume that $n$ and $\eta$ are constants where $\sqrt{n}$ is not an integer or a reciprocal of an integer. Then there exists infinitely many complex-valued ITEs for the sphere in $\R^3$, provided that $\eta$ is sufficiently small. 
\end{theorem}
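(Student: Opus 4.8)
The plan is to realize $d_\eta$ as an analytic perturbation of $d_0$ and to transfer the known complex zeros of $d_0$ to nearby complex zeros of $d_\eta$ by means of Hurwitz's theorem. First I would record that for every fixed $k$ the map $\eta \mapsto d_\eta(k)$ is affine,
\[
d_\eta(k) = d_0(k) + \eta\,\sin(k)\sin(k\sqrt{n}),
\]
so that $d_\eta \to d_0$ uniformly on every compact subset of $\C$ as $\eta \to 0$, and each $d_\eta$ is entire and not identically zero. Since $\sqrt{n}$ is neither an integer nor the reciprocal of an integer, the result of \cite{coltonyj} supplies infinitely many non-real zeros $\{k_*^{(j)}\}_{j \in \N}$ of $d_0$ with $|k_*^{(j)}| \to \infty$; these are the eigenvalues I intend to perturb.

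Fixing one such zero $k_*^{(j)}$, I would choose a radius $r_j$ smaller than $|\mathrm{Im}(k_*^{(j)})|$ and small enough that $d_0$ has no other zero in the closed disk $\overline{B\big(k_*^{(j)}, r_j\big)}$, so that this disk lies strictly off the real axis. Hurwitz's theorem (p.~152 of \cite{conway}), applied to any sequence $d_{\eta} \to d_0$, then produces a threshold $\eta_j>0$ such that for all $0<\eta<\eta_j$ the function $d_\eta$ has exactly as many zeros in $B\big(k_*^{(j)}, r_j\big)$ as $d_0$ does, namely at least one, and any such zero is necessarily non-real. This already yields, for each individual $j$ and all sufficiently small $\eta$, a genuinely complex-valued ITE for the sphere.

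The main obstacle is uniformity: the Hurwitz threshold $\eta_j$ a priori depends on $j$, and to obtain \emph{infinitely many} complex zeros for a \emph{single} small value of $\eta$ one must rule out $\eta_j \to 0$. Here I would exploit the structural fact that the perturbation is of lower order than $d_0$ itself: writing $d_0(k) = k\,g(k)$ with $g(k) = \sin(k\sqrt{n})\cos(k) - \sqrt{n}\sin(k)\cos(k\sqrt{n})$, the perturbing term $\sin(k)\sin(k\sqrt{n})$ carries no factor of $k$. Because $g$ and $\sin(k)\sin(k\sqrt{n})$ are entire of the same exponential type $1+\sqrt{n}$, their moduli are comparable on circles $\partial B\big(k_*^{(j)}, r_0\big)$ of a fixed radius $r_0$ chosen below $\inf_j|\mathrm{Im}(k_*^{(j)})|$, while $|d_0(k)| \gtrsim |k_*^{(j)}|$ there; hence the ratio $\eta\,|\sin(k)\sin(k\sqrt{n})|/|d_0(k)| = \mathcal{O}\big(\eta/|k_*^{(j)}|\big)$ tends to zero as $j \to \infty$. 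Rouché's theorem then applies on all but finitely many of these circles and forces a zero of $d_\eta$ inside each, giving infinitely many non-real zeros of $d_\eta$. The two ingredients I still owe are the lower bound on $|d_0|$ on these circles (using simplicity and asymptotic separation of the zeros of $g$) and the fact that infinitely many complex zeros of $d_0$ remain bounded away from $\R$; both follow from the classical asymptotics of zeros of exponential sums, which confine them to a horizontal strip of bounded width. If one only wants the qualitative statement for any prescribed finite family of complex eigenvalues, the first two paragraphs with a finite maximum of the thresholds $\eta_j$ already suffice.
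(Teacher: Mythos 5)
Your route is the same as the paper's: the paper's entire argument is the two sentences preceding the theorem, namely that $d_0$ has infinitely many complex roots by \cite{coltonyj} and that Hurwitz's theorem (p.~152 of \cite{conway}) transfers these to $d_\eta$ for $\eta$ small. Where you go further is in flagging the uniformity issue: Hurwitz gives a threshold $\eta_j$ for each disk $B\bigl(k_*^{(j)},r_j\bigr)$, and the paper simply does not address why a \emph{single} small $\eta$ captures infinitely many of these disks rather than only finitely many; read literally, the paper's argument only yields that any finite collection of complex zeros of $d_0$ perturbs to complex zeros of $d_\eta$. Your Rouch\'e patch is the right idea for closing this --- the perturbation $\eta\sin(k)\sin(k\sqrt{n})$ is bounded on a horizontal strip while $|d_0|=|k|\,|g(k)|$ grows linearly on circles of fixed radius about the $k_*^{(j)}$ --- but as you concede it rests on two facts not established here: a $j$-uniform lower bound for $|g|$ on those circles (uniform simplicity and separation of the zeros of $g$) and the existence of infinitely many complex zeros of $d_0$ whose imaginary parts are bounded away from zero. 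Neither is supplied by the paper, so your proposal is at least as complete as the published proof and more candid about what remains to be checked.
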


\begin{table}[!ht]
\centering
\begin{tabular}{c|c|c|c|c}
 4.443358    & 8.328\,578   & $3.003079+0.723476\mathrm{i}$&  $6.305573+0.787309\mathrm{i} $  &  $ 9.598536+0.669770\mathrm{i}$  
\end{tabular}
\caption{The eigenvalues for the unit sphere for $n=3$ and $\eta= 0$.}
\end{table}
With the absolute error $\epsilon_\eta^{(j)}=|k_0^{(j)}-k_\eta^{(j)}|$ 
for the $j$-th ITE we define the estimated order of convergence for the $j$-th ITE by 
$\mathrm{EOC}^{(j)}=\log(\epsilon_\eta^{(j)}/\epsilon_{\eta/2}^{(j)})/\log(2)$.

In Table \ref{tableConvergenceSphere}, we show the estimated order of convergence for the five ITEs.
\begin{table}[!ht]
\centering
\begin{tabular}{c|ccccc}
$\eta$    & $\mathrm{EOC}^{(1)}$ & $\mathrm{EOC}^{(2)}$ & $\mathrm{EOC}^{(3)}$ & $\mathrm{EOC}^{(4)}$ & $\mathrm{EOC}^{(5)}$\\
 \hline
 1    &     &     &     &     &\\
 1/2  &0.977&1.023&1.068&1.007&0.991\\
 1/4  &0.993&1.013&1.044&1.006&0.997\\
 1/8  &0.998&1.007&1.024&1.004&0.999\\
 1/16 &0.999&1.003&1.012&1.002&0.999\\
 1/32 &1.000&1.002&1.006&1.001&1.000\\
 1/64 &1.000&1.001&1.003&1.001&1.000\\
 1/128&1.000&1.000&1.002&1.000&1.000\\
 1/256&1.000&1.000&1.001&1.000&1.000\\
 \hline
\end{tabular}
\caption{\label{tableConvergenceSphere}The estimated order of convergence for five ITEs for a unit sphere using $n=3$ as $\eta\rightarrow 0$.}
\end{table}

Interestingly, the order of convergence seems to be linear for both the real and complex-valued ITEs. Note that the conjecture of the 
linear convergence for constant $n$ has already been made in \cite[Table 3]{te-cbc} for the real-valued ITEs, but the proof was still open.

Next, we show numerically that the 
eigenfunctions $w_\eta$ and $v_\eta$ for the ITEs converge to the eigenfunctions $w_0$ and $v_0$, respectively. 
Therefore, we need the squared $L^2$ error for the unit sphere which is given as
\[
\epsilon_\eta^{(w)}=4\pi\int_0^1 \left|w_\eta(r)-w_0(r)\right|^2r^2\,\mathrm{d}r 
\]
with the obvious definition of $\epsilon_\eta^{(v)}$. Note that we used $c_1=j_0(k_\eta)$ and $c_2=j_0(k_\eta\sqrt{n})$ in the definition of the two eigenfunctions. 
The estimated order of convergence is given by 
$$\mathrm{EOC}^{(w)}=\log\left(\sqrt{\epsilon_\eta^{(w)}/\epsilon_{\eta/2}^{(w)}}\right)/\log(2)=\log \big(\epsilon_\eta^{(w)}/\epsilon_{\eta/2}^{(w)} \big)/(2\log(2))$$ and 
likewise $\mathrm{EOC}^{(v)}$. In Table 
\ref{tableConvergenceSphereEigenfunctions} we show the estimated order of convergence for the first real and the first complex-valued ITEs.
\begin{table}[!ht]
\centering
\begin{tabular}{c|cc|cc}
      $\eta$    & $\mathrm{EOC}^{(w)}$ & $\mathrm{EOC}^{(v)}$ & $\mathrm{EOC}^{(w)}$ & $\mathrm{EOC}^{(v)}$ \\
 \hline
 1    &     &     &     &     \\
 1/2  &0.992&1.004&1.093&1.075\\
 1/4  &1.002&1.000&1.071&1.063\\
 1/8  &1.002&0.999&1.040&1.036\\
 1/16 &1.001&1.000&1.021&1.019\\
 1/32 &1.001&1.000&1.010&1.010\\
 1/64 &1.000&1.000&1.005&1.005\\
 1/128&1.000&1.000&1.003&1.002\\
 1/256&1.000&1.000&1.001&1.001\\
 \hline
\end{tabular}
\caption{\label{tableConvergenceSphereEigenfunctions}The estimated order of convergence for two pairs of  eigenfunctions for a unit sphere 
using $n=3$ as $\eta\rightarrow 0$.}
\end{table}
As we can see the estimated order of convergence is linear. The proof was still open.

Next, we use a unit circle with the same parameters as before. Therefore, the radially symmetric eigenfunctions $w(r)$ and $v(r)$ with $r=|x|$, satisfy the differential equations 
\begin{align*}
r^2 w'' +rw'+k^2 r^2 nw=0 \quad  \text{and}  \quad r^2 v'' +rv'+k^2 r^2 v=0 \quad \textrm{ in } \,  D. 
\end{align*} 
This implies that the eigenfunctions are given by $w(r)= c_1 \text{J}_0(k \sqrt{n} r)$ and $v(r)=c_2 \text{J}_0(k r)$ where $\text{J}_0$ is the Bessel function of order zero with $c_1$ and $c_2$ constants. Applying the boundary conditions and using that $\text{J}'_0=-\text{J}_1$ gives that 
$$\text{det}\left(
\begin{array}{cc}
 \text{J}_0(k \sqrt{n} ) & \, \,  -\text{J}_0(k ) \\
- k\sqrt{n}  \text{J}_1(k \sqrt{n} )  & \, \,  k \text{J}_1(k )- \eta \text{J}_0(k )
\end{array}
\right)=0$$
 where $\text{J}_1$ is the Bessel function of order 1. Therefore, we have that $k$ is an ITE provided that $d_\eta(k)=0$ where\\
$$ d_\eta(k)=k \text{J}_0(k \sqrt{n})\text{J}_1(k)-k \sqrt{n}\text{J}_0(k)\text{J}_1(k \sqrt{n}) -\eta \text{J}_0(k\sqrt{n} )\text{J}_0(k).$$
Just as the previous case we have that there existence of infinitely many complex roots of $d_0(k)$.

\begin{theorem}\label{complex-te-2}
Assume that $n$ and $\eta$ are positive constants. Then there exists infinitely many complex-valued ITEs for the disk in $\R^2$, provided that $\eta$ is sufficiently small. 
\end{theorem}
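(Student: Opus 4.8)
The plan is to follow the argument of Theorem \ref{complex-te-1} for the sphere and treat $d_\eta$ as a perturbation of $d_0$. Writing
$$ d_\eta(k)=d_0(k)-\eta\,\text{J}_0(k\sqrt{n})\text{J}_0(k), \qquad d_0(k)=k\,\text{J}_0(k\sqrt{n})\text{J}_1(k)-k\sqrt{n}\,\text{J}_0(k)\text{J}_1(k\sqrt{n}), $$
each $d_\eta$ is entire in $k$, and since $\text{J}_0$ is bounded on compact sets the estimate $|d_\eta(k)-d_0(k)|\le \eta\,|\text{J}_0(k\sqrt{n})\text{J}_0(k)|$ shows that $d_\eta\to d_0$ uniformly on compact subsets of $\C$ as $\eta\to 0^+$. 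As noted immediately before the statement, $d_0$ has infinitely many non-real zeros.

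First I would fix a non-real zero $k_0$ of $d_0$ and, using that the zeros of a nonzero entire function are isolated, enclose it in a small closed disk $\overline{B}\subset\C\setminus\R$ carrying no other zero of $d_0$. Hurwitz's theorem (p.~152 of \cite{conway}), applied to $d_\eta\to d_0$, then yields a zero of $d_\eta$ in $B$ for all $\eta$ sufficiently small, and this zero is complex because $\overline{B}$ avoids $\R$. Running this over the infinitely many non-real zeros of $d_0$ produces infinitely many complex ITEs for $d_\eta$, exactly as in the $\R^3$ case.

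The hard part will be the uniformity of the threshold in $\eta$: a direct appeal to Hurwitz gives, for the $j$-th zero $k_0^{(j)}$ of $d_0$, a threshold $\eta_j$ below which $d_\eta$ has a nearby zero, and a priori $\inf_j\eta_j$ might be $0$, so that no single small $\eta$ clearly retains infinitely many roots. To close this gap I would use the Bessel asymptotics $\text{J}_\nu(z)\sim\sqrt{2/(\pi z)}\,\cos\!\big(z-\nu\pi/2-\pi/4\big)$: the two products constituting $d_0$ carry a polynomial prefactor of order $k^0$, whereas the perturbation $\eta\,\text{J}_0(k\sqrt{n})\text{J}_0(k)$ carries an extra factor $1/k$, the exponential growth of the Bessel products (of order $\mathrm{e}^{|\mathrm{Im}(k)|(1+\sqrt{n})}$) being common to all three terms and hence cancelling in the comparison. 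Thus $|\eta\,\text{J}_0(k\sqrt{n})\text{J}_0(k)|$ is of relative size $1/|k|$ against $|d_0(k)|$ for large $|k|$, and a Rouch\'e argument on a family of expanding contours that isolate the large-modulus zeros of $d_0$ shows that $d_\eta$ inherits each of them once $|k|$ is large, with a threshold on $\eta$ bounded away from $0$. This upgrades the per-zero Hurwitz statement to the claimed persistence of infinitely many complex ITEs for every sufficiently small fixed $\eta$.
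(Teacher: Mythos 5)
Your proposal takes essentially the same route as the paper: the paper's entire argument for this theorem is the remark preceding it, namely that $d_0$ has infinitely many complex roots ``just as in the previous case'' and that Hurwitz's theorem (p.~152 of \cite{conway}) applied to $d_\eta \to d_0$ then yields infinitely many complex roots of $d_\eta$ for $\eta$ sufficiently small. Where you go beyond the paper is in flagging the uniformity issue: a per-zero application of Hurwitz gives thresholds $\eta_j$ that could a priori shrink to zero, so that a fixed small $\eta$ retains only finitely many roots. The paper does not address this at all, whereas your proposed fix --- using the large-argument Bessel asymptotics to show the perturbation $\eta\,\mathrm{J}_0(k\sqrt{n})\mathrm{J}_0(k)$ is of relative size $1/|k|$ against $d_0$, and then running Rouch\'e on expanding contours --- is the right kind of repair. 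Two caveats on that supplemental step: you still need the contours to stay where $|d_0|$ is bounded below (the oscillatory cosine factors in the asymptotics vanish on curves accumulating at the zeros of $d_0$, so the contours must be threaded between zeros, not chosen arbitrarily), and the existence of infinitely many non-real zeros of $d_0$ in the two-dimensional case is itself only asserted, not proved or cited, in the paper --- your proof inherits that assumption just as the paper's does.
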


Again, we obtain two real and three complex-valued ITEs 
in the domain $\Omega=[0,10]\times [0,10]\mathrm{i}\subset \mathbb{C}$.
\begin{table}[!ht]
\centering
\begin{tabular}{c|c|c|c|c}
 4.159236    & 8.261173  & $2.363421+0.781661\mathrm{i}$&  $5.646922+0.735262\mathrm{i}$  &  $ 8.814961+0.318519\mathrm{i}$  
\end{tabular}
\caption{The eigenvalues for the unit circle for $n=3$ and $\eta= 0$.}
\end{table}

As we can see in Table \ref{tableConvergenceCircle} the estimated order of convergence for the five ITEs is linear.
\begin{table}[!ht]
\centering
\begin{tabular}{c|ccccc}
      $\eta$    & $\mathrm{EOC}^{(1)}$ & $\mathrm{EOC}^{(2)}$ & $\mathrm{EOC}^{(3)}$ & $\mathrm{EOC}^{(4)}$ & $\mathrm{EOC}^{(5)}$\\
 \hline
 1    &     &     &     &     &\\
 1/2  &1.018&1.108&1.081&0.997&0.950\\
 1/4  &1.014&1.055&1.056&1.002&0.977\\
 1/8  &1.009&1.028&1.030&1.002&0.990\\
 1/16 &1.005&1.014&1.015&1.001&0.996\\
 1/32 &1.002&1.007&1.008&1.001&0.997\\
 1/64 &1.001&1.003&1.004&1.000&1.007\\
 1/128&1.001&1.002&1.002&0.999&0.989\\
 1/256&1.000&1.001&1.001&1.000&0.999\\
 \hline
\end{tabular}
\caption{\label{tableConvergenceCircle}The estimated order of convergence for five ITEs for a unit circle using $n=3$ as $\eta\rightarrow 0$.}
\end{table}

For the unit circle, we define the squared $L^2$ error by
\[
\epsilon_\eta^{(w)}=2\pi\int_0^1 \left|w_\eta(r)-w_0(r)\right|^2r\,\mathrm{d}r 
\]
with the obvious definition of $\epsilon_\eta^{(v)}$ and the appropriate use of the Bessel function $\mathrm{J}_0$ instead of $j_0$. In Table 
\ref{tableConvergenceCircleEigenfunctions} we show the estimated order of convergence for the first real and the first complex-valued ITEs. We again observe a linear convergence rate.

\begin{table}[!ht]
\centering
\begin{tabular}{c|cc|cc}
      $\eta$    & $\mathrm{EOC}^{(w)}$ & $\mathrm{EOC}^{(v)}$ & $\mathrm{EOC}^{(w)}$ & $\mathrm{EOC}^{(v)}$ \\
 \hline
 1    &     &     &     &     \\
 1/2  &1.034&0.941&1.082&1.071\\
 1/4  &1.022&0.967&1.083&1.078\\
 1/8  &1.012&0.983&1.047&1.046\\
 1/16 &1.006&0.992&1.025&1.024\\
 1/32 &1.003&0.996&1.013&1.012\\
 1/64 &1.002&0.998&1.006&1.006\\
 1/128&1.001&0.999&1.003&1.003\\
 1/256&1.000&0.999&1.002&1.002\\
 \hline
\end{tabular}
\caption{\label{tableConvergenceCircleEigenfunctions}The estimated order of convergence for two pairs of ITEs for a unit circle 
using $n=3$ as $\eta\rightarrow 0$.}
\end{table}

\subsection{The inside-outside-duality method}
Recently, a new method has been invented to determine ITEs (see \cite{armin}) which has been successfully applied to various scattering problems (see \cite{lechrennoch, peters, peterskleefeld, peterslech}). It is another approach which does not need to solve a non-linear eigenvalue problem as done in \cite{kleefeldITP, kleefeldhabil}.

In this section, we show numerically that we are able to determine the ITEs using the inside-outside-duality approach,
although the theory still has to be carried out. 
We refer the reader to \cite{peterskleefeld} for the details of the implementation of the inside-outside-duality method and \cite{kleefeldE} for some recent advances. This approach connects the eigenvalues of the far field operator to the ITEs. Let $\lambda_j(k) \in \C$ for $j \in \N$ be the eigenvalues of ${F}={F}_k$  defined in \eqref{FF-operator} and $\phi_j(k)=\text{arg} \big\{ \lambda_j(k) \big\}$. In \cite{peterskleefeld} it is shown that for $\eta =0$ the function $\phi(k) =\max_{j \in \N} \phi_j(k)$ satisfies 
$$\phi(k) \rightarrow \pi \quad \text{ as } \quad k \rightarrow k_0 \, \, \, \text{ a transmission eigenvalue.}$$
We choose an equidistant grid $[1,5]$ with grid size $0.01$ and plot the phase curves $p$ against the wave number $k$. As we can see in Figure \ref{iod} we are able to determine five ITEs for the cases $\eta=0.1$, $0.5$, $1$, and $3$.

\begin{table}[!ht]
\centering
\begin{tabular}{c|ccccc}
      $\eta$    & 1. & 2. & 3. & 4. & 5.\\
 \hline
 0.1    &  3.10   &   3.13  &    3.68  &  4.25   & 4.82 \\
 0.5    & 2.97  & 3.08  & 3.64   &  4.22 & 4.79\\
 1       &2.79 &  3.02  & 3.60  &4.18  & 4.76\\
 3       &2.20   &2.80   &3.43   &4.04   &4.64\\
 \hline
\end{tabular}
\caption{ The five reconstructed ITEs for the unit sphere for $n=4$ by the inside-outside-duality method}
\end{table}

Comparing the results with \cite[Table 1]{te-cbc} shows that we are able to obtain the ITEs within the accuracy of the chosen grid size. Interestingly, the value $3.141\,593$ as reported in \cite[Table 1]{te-cbc} is missing.
There is another important observation to be mentioned. It has been noted that the phase curves are monotonically increasing towards $\pi$ for the ITEs (with $\eta=0$) for various obstacles (see \cite{peterskleefeld}), but this is not the case for $\eta \neq 0$ as one can clearly see in the second row of Figure \ref{iod}.

\begin{figure}[!ht]
\centering
 \includegraphics[height=5.5cm]{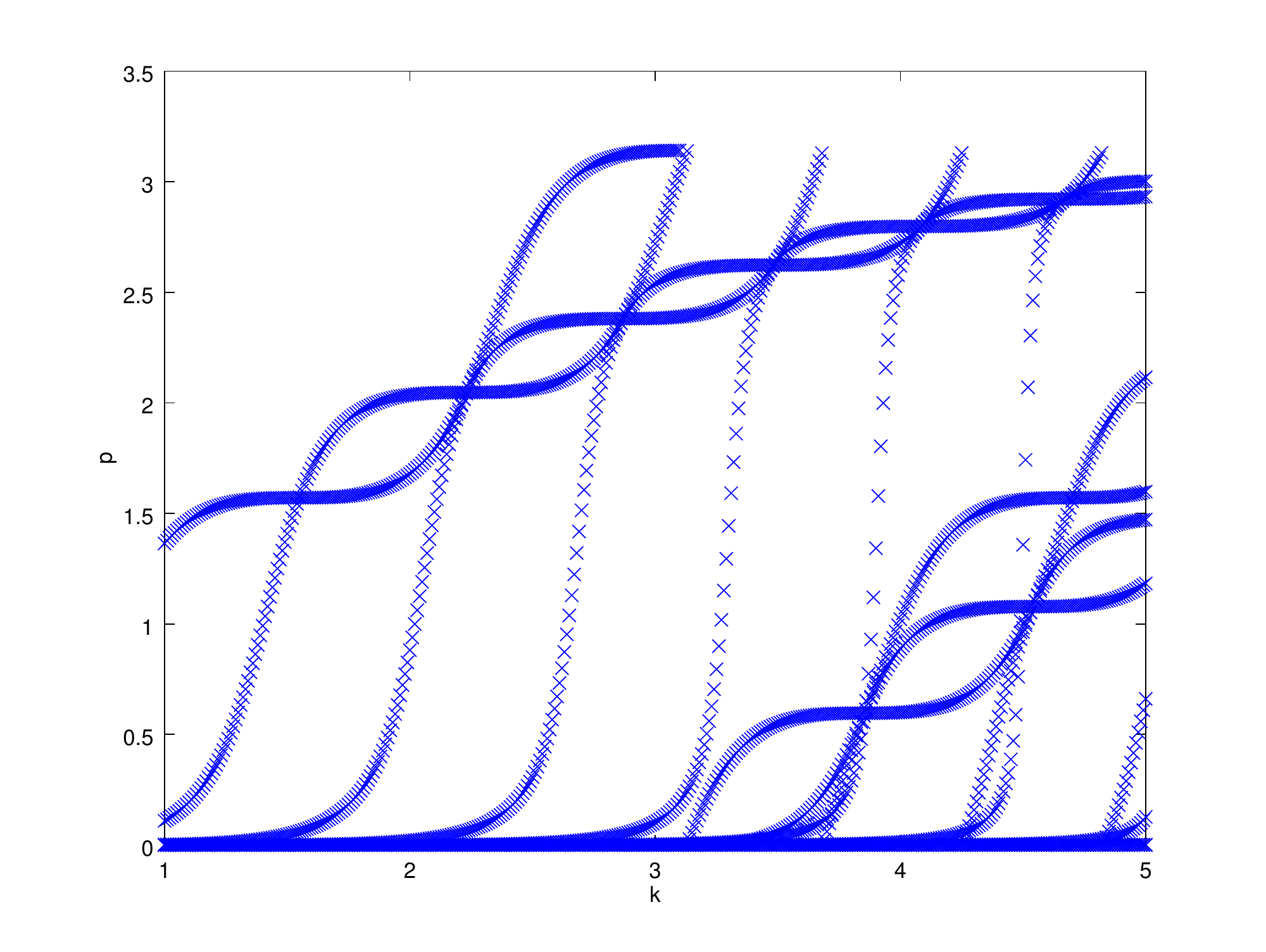}
 \includegraphics[height=5.5cm]{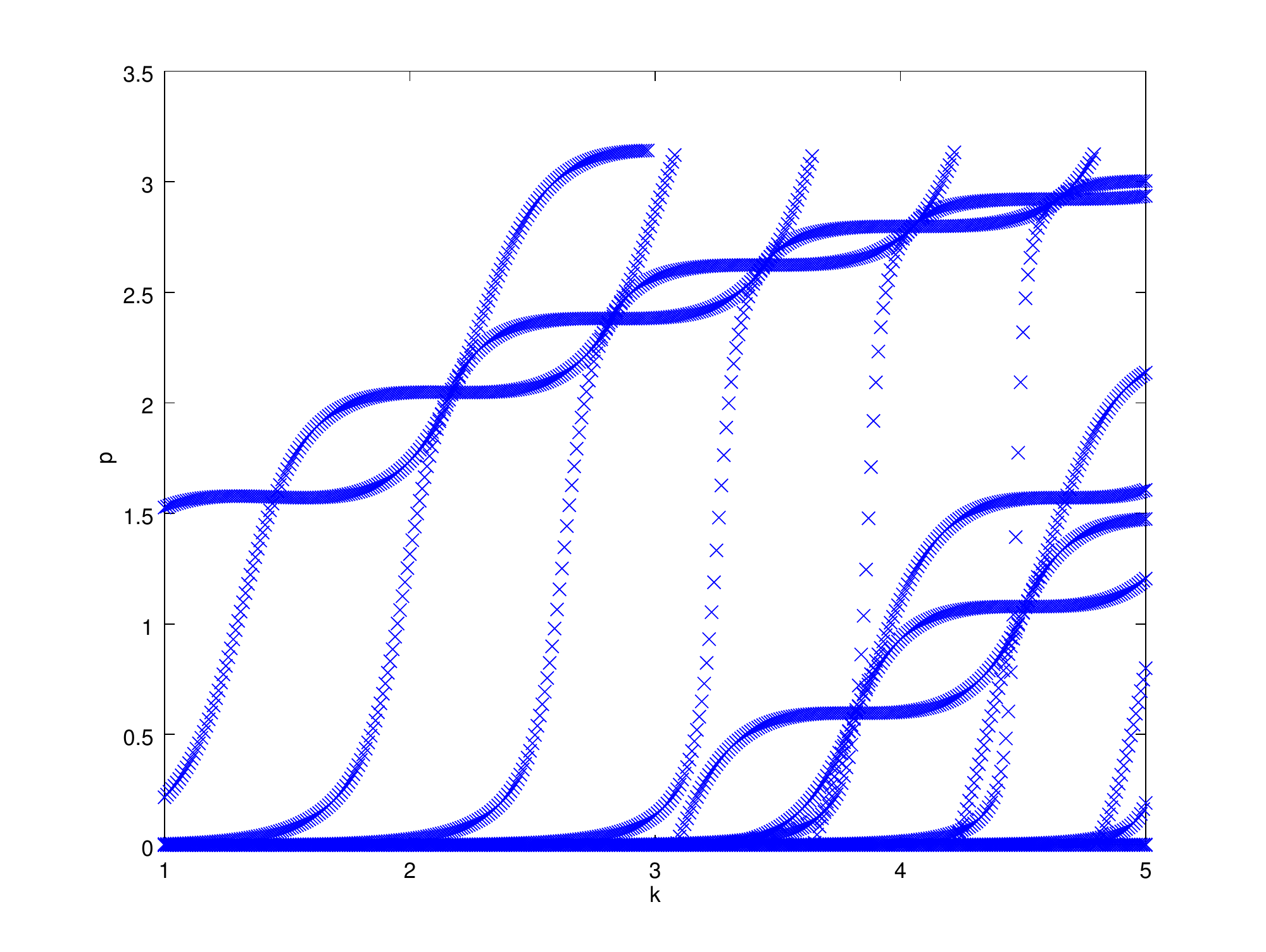}
  \includegraphics[height=5.5cm]{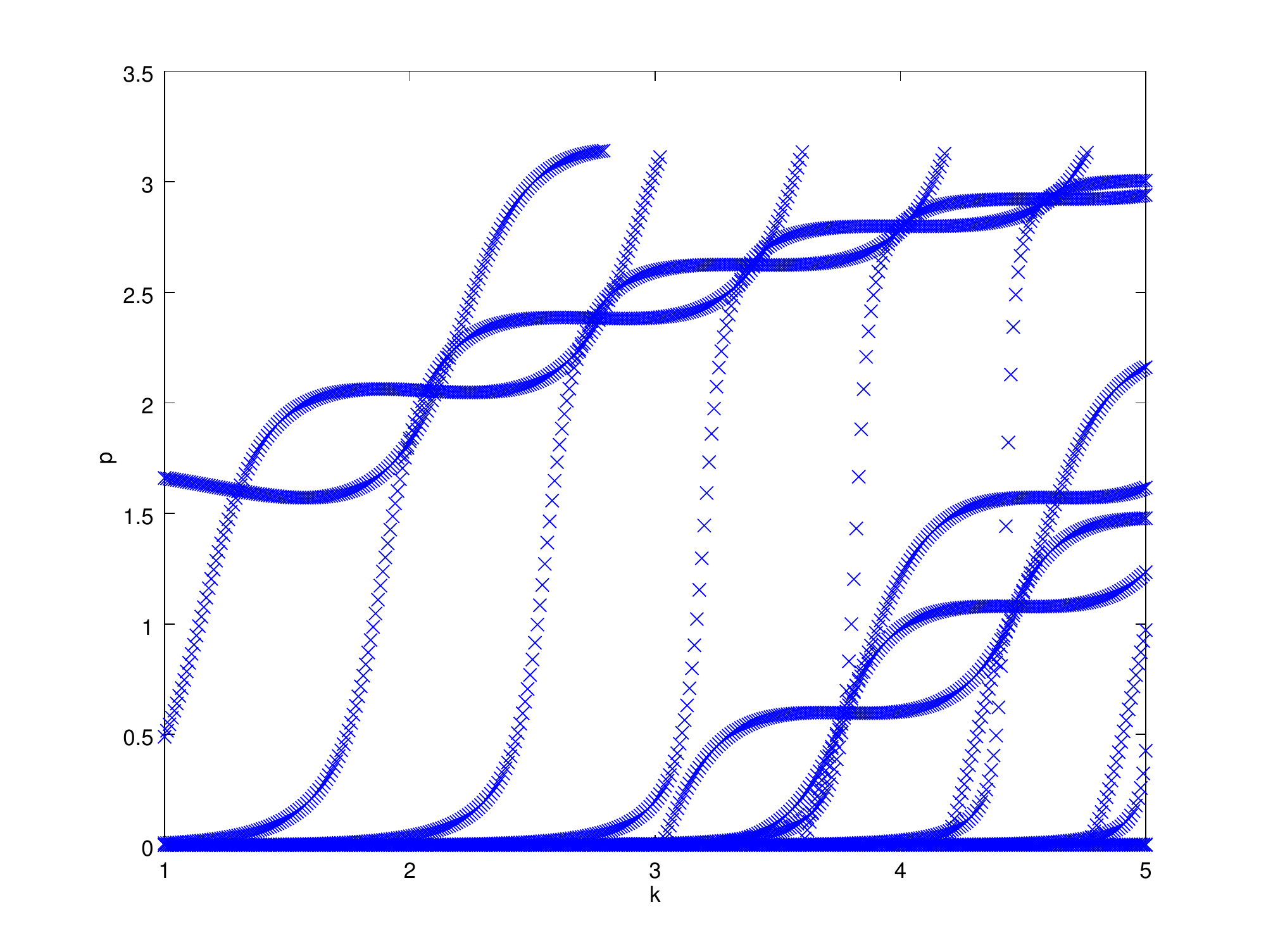}
  \includegraphics[height=5.5cm]{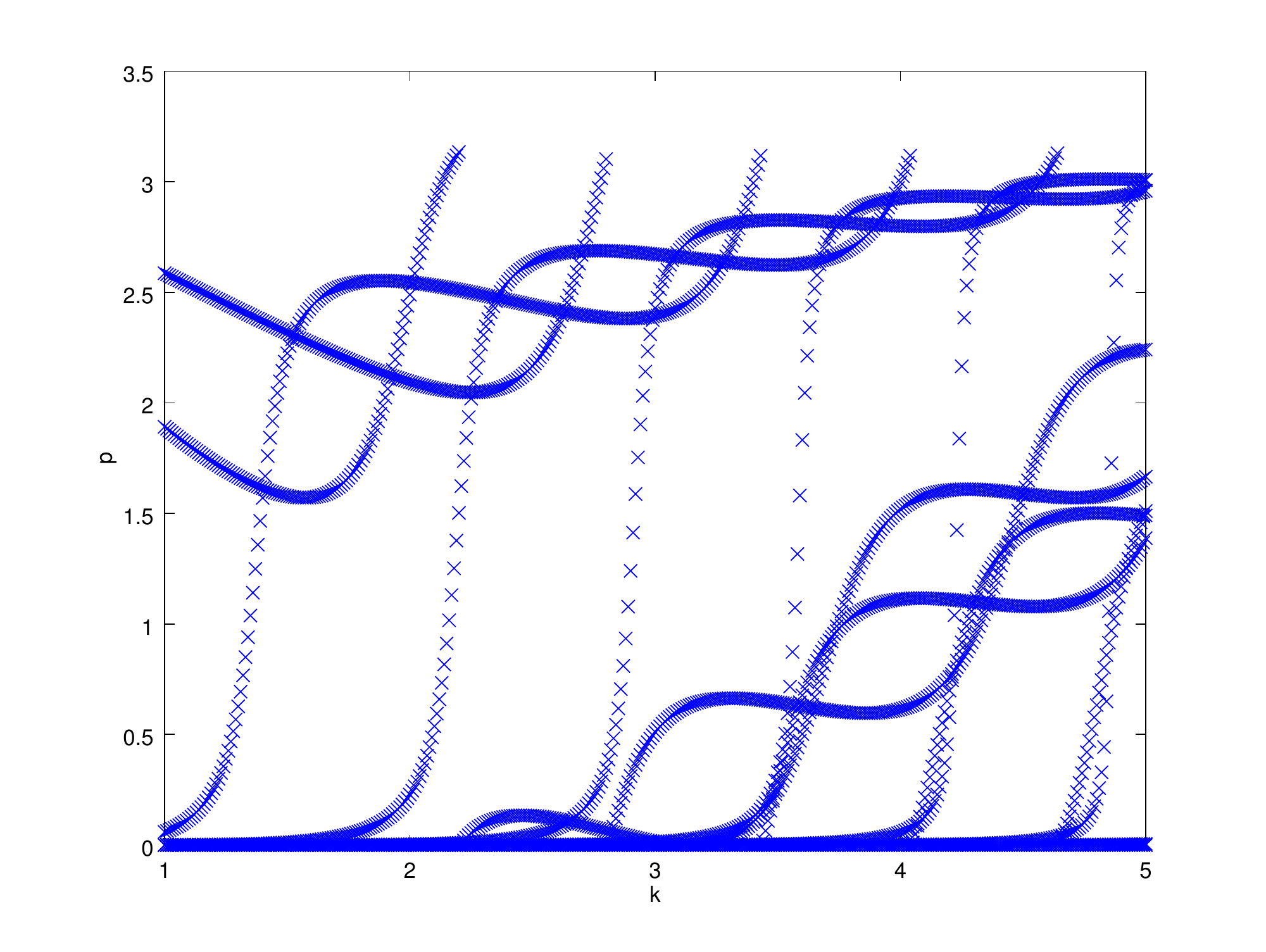}
 \caption{\label{iod}First row: The phase curves for a unit sphere using $\eta=0.1$ and $\eta=0.5$. Second row: The phase curves for a unit sphere using $\eta=1$ and $\eta=3$.}
\end{figure}

Additionally, we show that we are also able to find the ITEs with the inside-outside-duality method for a unit circle using the same parameters as before. In Figure \ref{iod2} we show the phase curves for the two cases $\eta=1$ and $\eta=3$. Again, it is interesting to observe that the phase curves are several times increasing and decreasing before approaching the value $\pi$.

\begin{table}[ht!]
\centering
\begin{tabular}{c|cccccc}
      $\eta$    & 1. & 2. & 3. & 4. & 5. & 6.\\
 \hline
 1       &2.77 &  3.29  & 3.31  &3.89  & 4.47 &  $-$ \\
 3       &2.49   &3.12   &3.14   &3.74   &4.34 &4.94 \\
 \hline
\end{tabular}
\caption{The reconstructed ITEs for the unit circle for $n=4$ by the inside-outside-duality method}
\end{table}

\begin{figure}[ht!]
\centering
  \includegraphics[height=5.5cm]{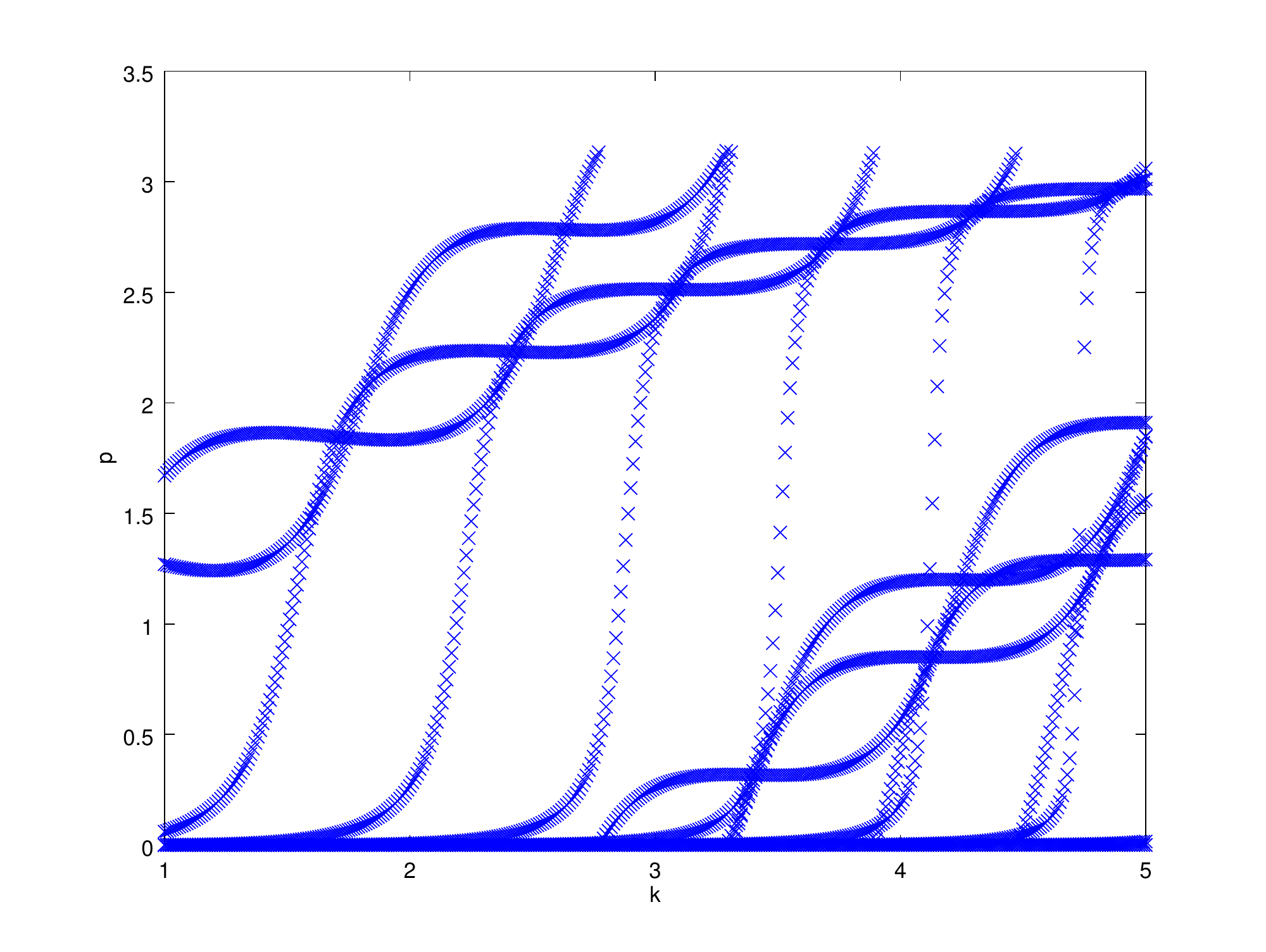}
  \includegraphics[height=5.5cm]{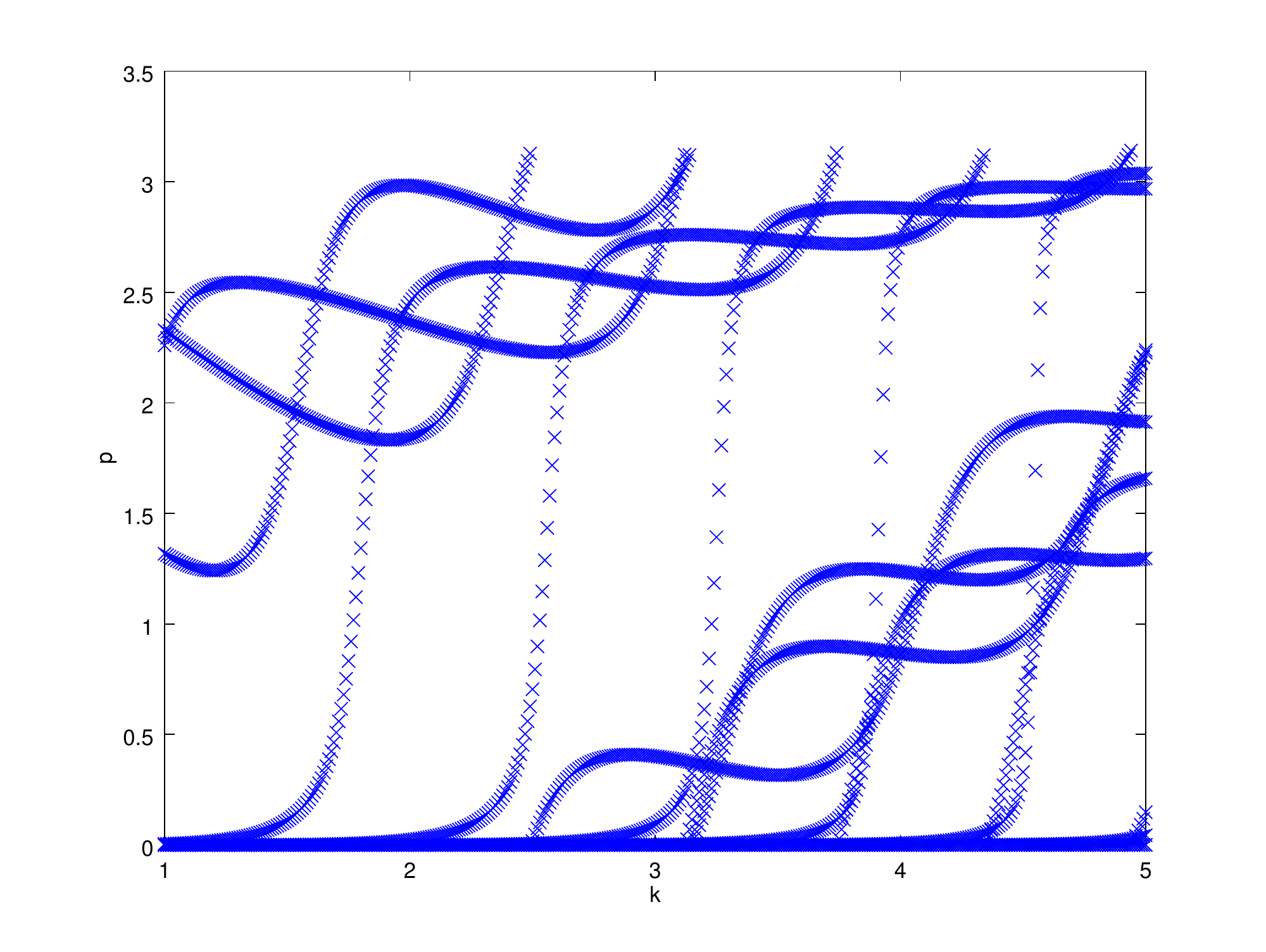}
 \caption{\label{iod2}The phase curves for a unit circle using $\eta=1$ and $\eta=3$.}
\end{figure}

\section{Summary and conclusion}
In this article, it has been shown how to obtain the conductivity parameter $\eta$ from either far field or near field data with the main focus on the last case. Further, uniqueness is proven and an algorithm is presented to retrieve the parameter $\eta$ where a constant index of refraction is assumed. Next, a detailed study of the conductive eigenvalue problem is performed. Precisely, it has been illustrated how to acquire conductive interior transmission eigenvalues from either far field or near field data for the case of a non-absorbing media. Additionally, it has been proven that the conductive interior transmission eigenvalues and corresponding eigenfunctions converge linearly to the classic interior transmission eigenvalues and eigenfunctions, respectively. Further, existence and discreteness of conductive interior transmission eigenvalues is shown for an absorbing media. Finally, numerical experiments confirm the theoretical findings for two- and three-dimensional scatters. Above all, it is illustrated that one is able to get conductive interior transmission eigenvalue using the inside-outside duality method. However, the theory has not been carried out and is therefore a subject of future research. Some questions that are still open for this problem are:
\begin{enumerate}
\item Asymptotic expansion of the eigenvalues and eigenfunctions as $\eta \rightarrow 0$.
\item Analysis of the interior conductive eigenvalue problem as $\eta \rightarrow \infty$. 
\item Inverse spectral problem of reconstructing either $n$ and/or $\eta$ from a knowledge of the spectral data for a spherically stratified media.
\item Existence of complex-valued interior conductive eigenvalues for more general domains and coefficients.
\end{enumerate}

\renewcommand{\refname}{REFERENCES}


\begin{thebibliography}{99}


\bibitem{te-cbc}
\newblock O. Bondarenko, I. Harris, and A. Kleefeld, 
\newblock The interior transmission eigenvalue problem for an inhomogeneous media with a conductive boundary, 
\newblock {\it Applicable Analysis}, {\bf 96}, No. 1, (2017), 2--22.

\bibitem{fmconductbc}
\newblock O. Bondarenko and X. Liu,
\newblock The factorization method for inverse obstacle scattering with conductive boundary condition,
\newblock {\it Inverse Problems}, {\bf 29} (2013), 095021.

\bibitem{p1} 
\newblock F. Cakoni  and  D. Colton,
\newblock \emph{ ``A Qualitative Approach to Inverse Scattering Theory''},
\newblock Springer, Berlin 2014.


\bibitem{CCG} 
\newblock F. Cakoni,  D. Colton, and D. Gintides,
\newblock The interior transmission eigenvalue problem,
\newblock {\it SIAM J. Math. Analysis}, {\bf 42} , No. 6, (2010), 2912--2921.

\bibitem{cchlsm}
\newblock F. Cakoni, D. Colton, and H. Haddar,
\newblock On the determination of Dirichlet or transmission eigenvalues from far field data,
\newblock {\it C. R. Acad. Sci. Paris}, {\bf 348} (2010), 379--383.

\bibitem{TE-absorbing}
\newblock F. Cakoni D. Colton and H. Haddar,
\newblock The interior transmission eigenvalue problem for absorbing media, 
\newblock {\it Inverse Problems,} {\bf 28} (2012) 045005.

\bibitem{ccm-findbc}
\newblock F. Cakoni D. Colton and P. Monk, 
\newblock The determination of boundary coefficients from far field measurements,
\newblock {\it J. Int. Eqns. Appl.} {\bf 42}, No. 2, (2010), 167--191.


\bibitem{chtevexist}
\newblock F. Cakoni and H. Haddar,
\newblock On the existence of transmission eigenvalues in an inhomogeneous medium,
\newblock {\it Applicable Analysis}, {\bf 88}  (2009), 475--493.

\bibitem{chh}
\newblock F. Cakoni, H. Haddar, and I. Harris,
\newblock Homogenization of the transmission eigenvalue problem for periodic media and application to the inverse problem, 
\newblock {\it Inverse Problems and Imaging}, {\bf 9}, No. 4, (2015), 1025--1049.

\bibitem{coltonkress}
D. Colton and R. Kress,
\newblock {\em ``Inverse Acoustic and Electromagnetic Scattering Theory''},
\newblock Springer, New York, third edition, 2013.


\bibitem{DMS-te}
\newblock D. Colton, P. Monk, and J. Sun,
\newblock Analytical and computational methods for transmission eigenvalues, 
\newblock {\it Inverse Problems}, {\bf 26} (2010), 045011.


\bibitem{CLS} 
\newblock D. Colton, L. P\"aiv\"arinta, and J. Sylvester, 
\newblock The interior transmission problem, 
\newblock {\it Inverse Problems and Imaging}, {\bf 1}, No. 1, (2007), 13--28.

\bibitem{conway} 
\newblock J. Conway,
\newblock \emph{ ``Functions of One Complex Variable''},
\newblock Springer, New York, second edition, 1978.

\bibitem{evans}
\newblock L. Evans,
\newblock \emph{``Partial Differential Equations''}, 
\newblock second edition, AMS 2010.
  
\bibitem{gpinvtev}
\newblock D. Gintides and N. Pallikarakis,
\newblock A computational method for the inverse transmission eigenvalue problem,
\newblock {\it Inverse Problems}, {\bf 29} (2013), 104010.
  
 \bibitem{cavityhcs}
 \newblock  I. Harris, F. Cakoni, and J. Sun, 
\newblock Transmission eigenvalues and non-destructive testing of anisotropic magnetic materials with voids,
\newblock {\it Inverse Problems}, {\bf 30} (2014), 035016.


\bibitem{AK-TE}
\newblock A. Kirsch, 
\newblock The denseness of the far field patterns for the transmission problem,
\newblock {\it IMA J. Appl. Math.},  {\bf 37}  (1986), 213--223.

\bibitem{kirschbook}
\newblock A. Kirsch and N. Grinberg, 
\newblock \emph{The Factorization Method for Inverse Problems},
\newblock Oxford University Press, Oxford, 2008.

\bibitem{armin}
\newblock A. Kirsch and A. Lechleiter,
\newblock The inside-outside duality for scattering problems by inhomogeneous media,
\newblock {\it Inverse Problems}, {\bf 29} (2013), 104011.

\bibitem{kleefeldITP}
\newblock A. Kleefeld,
\newblock A numerical method to compute interior transmission eigenvalues,
\newblock {\it Inverse Problems}, {\bf 29} (2013), 104012.

\bibitem{kleefeldhabil}
\newblock A. Kleefeld,
\newblock Numerical methods for acoustic and electromagnetic scattering: Transmission boundary-value problems, interior transmission eigenvalues, and the factorization method,
\newblock Habilitation Thesis, Brandenburg University of Technology Cottbus-Senftenberg, 2015.


\bibitem{kleefeldE}
\newblock A. Kleefeld and E. Reichwein,
\newblock Improvement of the Inside-Outside Duality Method,
\newblock Integral Methods in Science and Engineering, Vol. 1, Springer, pp.  149--159 (2017)


\bibitem{lechrennoch}
\newblock A. Lechleiter and M. Rennoch,
\newblock Inside-outside duality and the determination of electromagnetic interior transmission eigenvalues,
\newblock {\it SIAM J. Math. Anal.}, {\bf 47} (2015), 684--705.


\bibitem{sharinonlinear} S. Moskow, Nonlinear eigenvalue approximation for compact operators, {\it Journal of Mathematical Physics}, {\bf 56} (2015), 113512.



\bibitem{coltonyj}
\newblock Y. Leung and D. Colton,
\newblock Complex transmission eigenvalues for spherically stratified media,
\newblock {\it Inverse Problems}, {\bf 28}  (2012), 075005.


\bibitem{first}
\newblock L. P\"aiv\"arinta and J. Sylvester,
\newblock Transmission eigenvalues, 
\newblock {\it SIAM J. Math. Anal}, {\bf 40} (2008), 738--753.

\bibitem{peters}
\newblock S. Peters,
\newblock The inside-outside duality for elastic scattering problems,
\newblock {\it Applicable Analysis}, {\bf 96}, No. 1, (2016), 48--69.

\bibitem{peterskleefeld}
\newblock S. Peters and A. Kleefeld,
\newblock Numerical computations of interior transmission eigenvalues for scattering objects with cavities,
\newblock {\it Inverse Problems}, {\bf 32}  (2016), 045001.

\bibitem{peterslech}
\newblock S. Peters and A. Lechleiter,
\newblock The inside-outside duality for inverse scattering problems with near field data,
\newblock {\it Inverse Problems}, {\bf 31}  (2015), 085004.

 \bibitem{monkyang}
 \newblock F. Yang and P. Monk, 
\newblock The interior transmission problem for regions on a conducting surface,
\newblock {\it Inverse Problems}, {\bf 30} (2014), 015007.

\end{thebibliography}
\end{document}